\newtheorem{theorem}{Theorem}
\newtheorem{proposition}[theorem]{Proposition}
\newtheorem{corollary}[theorem]{Corollary}
\newtheorem{lemma}[theorem]{Lemma}
\theoremstyle{definition}
\newtheorem{definition}[theorem]{Definition}
\newtheorem{example}[theorem]{Example}
\theoremstyle{remark}
\newtheorem{source}[theorem]{SOURCE}
\newcommand{\QQ}{\mathbb{Q}}
\newcommand{\RR}{\mathbb{R}}
\newcommand{\ZZ}{\mathbb{Z}}
\newcommand{\CC}{\mathbb{C}}
\DeclareMathOperator{\codim}{codim}
\title{Strong Lefschetz elements of the coinvariant rings
  of finite Coxeter groups}
\author{Toshiaki Maeno, Yasuhide Numata and Akihito Wachi}
\date{}
\begin{document}
\maketitle

\begin{abstract}
For the coinvariant rings of finite Coxeter groups 
of types other than H$_4$,
we show that a homogeneous element of degree one
is a strong Lefschetz element 
if and only if it is not fixed by any reflections.
We also give the necessary and sufficient condition
for strong Lefschetz elements 
in the invariant subrings of 
the coinvariant rings of Weyl groups.
\end{abstract}

2000 Mathematics Subject Classification:
primary 20F55; secondary 13A50, 14M15, 14N15.

Keywords:
Coxeter group, Weyl group, coinvariant ring,
strong Lefschetz property, flag variety, hard Lefschetz theorem.

\section{Introduction}
\label{sec:intro}

Let $W$ be a finite Coxeter group generated by reflections
acting on an $n$-dimensional $\RR$-vector space $V$.
The polynomial ring $\RR[V]$ is then equipped 
with the action of $W$.
The {\em coinvariant ring} of $W$ is defined by $R = \RR[V]/J$,
where $J$ is the graded ideal of $\RR[V]$ 
generated by the $W$-invariant polynomials without constant terms.
The first purpose of this paper is 
to give the necessary and sufficient condition
for the {\em strong Lefschetz elements}
(Definition~\ref{defn:slp} below)
of $R$:
\begin{theorem}
\label{thm:condition-for-full-flag}
Let $W$ be a finite Coxeter group 
which does not contain irreducible components of type H$_4$.
Then a homogeneous element $\ell$ of degree one
is a strong Lefschetz element
if and only if $\ell$ is not fixed by any reflections of $W$.
\qed
\end{theorem}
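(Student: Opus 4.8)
The plan is to prove the two implications separately, and to reduce the non-trivial (``if'') direction to the case of irreducible $W$.

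\emph{Necessity.} This part is uniform and short. Recall that $R$ is a Poincar\'e duality algebra whose socle sits in degree $N$, the number of reflections of $W$; $R_N$ is one-dimensional, spanned by the image $\bar\Delta$ of $\Delta=\prod_H\ell_H$, the product of the linear forms $\ell_H$ cutting out the reflecting hyperplanes $H$, and $w\cdot\bar\Delta=(\det w)\,\bar\Delta$ for $w\in W$, so every reflection acts on $R_N$ by $-1$. If $\ell$ is fixed by a reflection $s$, write $\ell^N=c\,\bar\Delta$ in $R_N$; then $\ell^N=(s\ell)^N=s\cdot\ell^N=-c\,\bar\Delta=-\ell^N$, whence $\ell^N=0$, so multiplication by $\ell^N$ from $R_0$ to $R_N$ is not injective and $\ell$ is not a strong Lefschetz element.

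\emph{Reduction of sufficiency to the irreducible case.} If $W=W_1\times\cdots\times W_k$ acts on $V=V_1\oplus\cdots\oplus V_k$, then $R\cong R^{(1)}\otimes\cdots\otimes R^{(k)}$ as graded algebras; the reflections of $W$ are precisely the reflections of the factors, acting as the identity on the remaining summands; and, writing $\ell=\ell_1+\cdots+\ell_k$ with $\ell_j\in V_j^{*}$, no reflection of $W$ fixes $\ell$ exactly when no reflection of $W_j$ fixes $\ell_j$ for every $j$. Since $\ell_1+\cdots+\ell_k$ is a strong Lefschetz element of $R^{(1)}\otimes\cdots\otimes R^{(k)}$ whenever each $\ell_j$ is a strong Lefschetz element of $R^{(j)}$ (a standard fact, proved via the $\mathfrak{sl}_2$-description of Lefschetz elements), it suffices to treat irreducible $W$ not of type $H_4$. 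For such $W$, the $W$-action on $R$ is by graded ring automorphisms, so the set $L\subseteq R_1$ of strong Lefschetz elements is $W$-stable; combined with necessity this gives $L\subseteq R_1\setminus\bigcup_H H$, and since $R_1\setminus\bigcup_H H$ is the union of the open chambers, on which $W$ acts simply transitively, it is enough to show that the open fundamental chamber $C_0$ is contained in $L$ --- then $L=\bigcup_w wC_0=R_1\setminus\bigcup_H H$, which is the assertion.

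\emph{The irreducible types.} For the Weyl groups ($A_n$, $B_n=C_n$, $D_n$, $E_6$, $E_7$, $E_8$, $F_4$, $G_2$) one invokes geometry: by Borel's theorem $R\cong H^{*}(G/B;\RR)$ with $R_1$ the real weight space and $C_0$ the cone of strictly dominant weights, which is exactly the K\"ahler cone of the projective variety $G/B$; the hard Lefschetz theorem then makes every class in $C_0$ a strong Lefschetz element, and extending scalars to $\CC$ does not change whether a fixed $\ell\in R_1$ is one. For the dihedral groups $I_2(m)$ one computes directly in complexified coordinates $z,\bar z$: one has $R\cong\CC[z,\bar z]/(z\bar z,\,z^m+\bar z^m)$ with $\bar z^m=-z^m$ and monomial basis $\{z^k:0\le k\le m\}\cup\{\bar z^k:1\le k\le m-1\}$, a real linear form is $\ell=az+\bar a\bar z$, and because $z\bar z=0$ kills all cross terms, multiplication by $\ell^{m-2i}$ from $R_i$ to $R_{m-i}$ is the diagonal map $\mathrm{diag}(a^{m-2i},\bar a^{m-2i})$ for $1\le i<m/2$, the identity for $i=m/2$, and $\ell^m=(a^m-\bar a^m)z^m$; these are bijective for all $i$ precisely when $a\ne 0$ and $a^m\notin\RR$, i.e.\ exactly when no reflection $z\mapsto e^{2\pi ik/m}\bar z$ fixes $\ell$. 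Finally, for $H_3$ one writes $R=\RR[x,y,z]/(\theta_2,\theta_6,\theta_{10})$ via the classical icosahedral invariants ($\dim_\RR R=120$, $N=15$) and carries out a finite, computer-assisted evaluation of the multiplication maps by powers of $\ell=\ell_1x+\ell_2y+\ell_3z$ with indeterminate coefficients; this shows that the Lefschetz determinants $P_i(\ell)=\det\big(\ell^{15-2i}\colon R_i\to R_{15-i}\big)$, $0\le i\le 7$, have common real zero locus exactly the reflection arrangement $\bigcup_H H$ (equivalently, each $P_i$ is a nonzero scalar times a product of powers of the $\ell_H$ and of forms, such as $\theta_2$, with no nontrivial real zeros), so $C_0\subseteq L$ and in fact $L=R_1\setminus\bigcup_H H$. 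The $H_4$ version of this last computation --- basic invariants of degrees $2,12,20,30$, $\dim_\RR R=14400$, $N=60$, symbolic determinants of matrices of size up to several hundred --- is what lies beyond feasible reach, which is why $H_4$ is excluded.

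The main obstacle is exactly the $H_3$ case, or rather the structural statement behind it (that the Lefschetz determinants of the coinvariant ring of $H_3$ vanish on the reflection arrangement and on no other real hypersurface): there is no flag variety to feed into the hard Lefschetz theorem, and the reduction to a single open chamber only closes the argument once one has this control over the real zero loci of the determinants.
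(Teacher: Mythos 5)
Your overall strategy coincides with the paper's: necessity via the fact that the top component $R_N$ carries the sign representation and is spanned by the image of $\prod_H \ell_H$; reduction to irreducible $W$ through the tensor decomposition of $R$; the Weyl-group case via Borel's isomorphism, the hard Lefschetz theorem for $G/B$, Kleiman/Chevalley identification of the ample cone with the dominant chamber, and transitivity of $W$ on chambers; and a case-by-case treatment of $I_2(m)$ and $H_3$. The one genuinely different ingredient is your handling of $I_2(m)$: the presentation $R\otimes_{\RR}\CC\cong\CC[z,\bar z]/(z\bar z,\;z^m+\bar z^m)$ makes every map $\times\ell^{m-2i}$ diagonal in the basis $\{z^k,\bar z^k\}$, with $\ell^m=(a^m-\bar a^m)z^m$, and your identification of the failure locus with $a^m\in\RR$, i.e.\ exactly the reflection arrangement, checks out; this is simpler and more transparent than the paper's route, which works in the Schubert basis, proves $\ell^m\neq0$ by a polarization identity together with a divisibility argument (the degree-$m$ determinant vanishes on all $m$ reflecting hyperplanes, hence is their product up to a nonzero scalar), and shows $\times\ell\colon R_k\to R_{k+1}$ is invertible by a Pieri-formula computation whose discriminant is a product of sines. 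For $H_3$ you propose what the paper in fact does, a finite computer verification, but note two points: what the paper's Macaulay~2 run certifies is weaker than your claim and sufficient --- writing $\ell=a_1\varpi_1+a_2\varpi_2+a_3\varpi_3$, all coefficients of each determinant $f_i(a_1,a_2,a_3)$ have a single sign, so $f_i\neq0$ on the open fundamental chamber, and the chamber-transitivity reduction (which you also set up) finishes the proof; your stronger assertion that each Lefschetz determinant factors into arrangement forms and definite forms is neither needed nor verified and is not literally equivalent to that nonvanishing statement. As in the paper, this step ultimately rests on the machine computation itself, which your proposal asserts rather than exhibits; with that caveat the argument is complete for all types other than $\mathrm{H}_4$.
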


The second purpose is to give the necessary and sufficient condition
for strong Lefschetz elements of the 
{\em parabolic invariant} $R^{W_S}$,
which is by definition the invariant subring of $R$
under the action of a parabolic subgroup $W_S$ of $W$:
\begin{theorem}
\label{thm:condition-for-partial-flag}
Let $W$ be a Weyl group, and $W_S$ a parabolic subgroup of $W$.
Then a homogeneous element $\ell$ of degree one
of the parabolic invariant $R^{W_S}$ 
is a strong Lefschetz element
if and only if $\ell$ is not fixed by any reflections in
$W \smallsetminus W_S$.
\qed
\end{theorem}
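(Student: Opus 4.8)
The plan is to translate the statement into the geometry of flag varieties; the \emph{if} half then reduces to the Hard Lefschetz theorem, while the \emph{only if} half is a short nilpotency argument. Fix a semisimple group $G$ with Weyl group $W$; then $R\cong H^*(G/B;\RR)$, and for every standard parabolic $W_T\subseteq W$ the parabolic invariant $R^{W_T}$ is the cohomology ring $H^*(G/P_T;\RR)$, a Gorenstein graded algebra whose top degree is $N-N_T$, where $N$ (resp.\ $N_T$) is the number of reflections of $W$ (resp.\ of $W_T$). Under this identification $R^{W_T}_1=(V^*)^{W_T}$ is the space of real characters of $P_T$, and such a character $\mu$ is the first Chern class of an ample line bundle precisely when $\langle\mu,\alpha^\vee\rangle>0$ for every simple root $\alpha$ not in $T$. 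Besides these classical facts I will use the freeness of $\RR[V]$ over $\RR[V]^{W_S}$ — which gives that $R$ is free over $R^{W_S}$ with $H_R(q)=H_{R^{W_S}}(q)\,H_{C_S}(q)$, $C_S$ the coinvariant ring of $W_S$ (top degree $N_S$) — together with Steinberg's theorem that the pointwise stabilizer in $W$ of a vector is generated by the reflections it contains.

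For the \emph{only if} direction I would use a nilpotency estimate. Suppose $\ell\in R^{W_S}_1$ is fixed by a reflection $s_\beta\in W\smallsetminus W_S$. Then $\ell$ is fixed by $W':=\langle W_S,s_\beta\rangle$, a reflection subgroup strictly containing $W_S$ and hence with $N_{W'}\ge N_S+1$ reflections; the Hilbert-series identity applied to $W'$ shows $R^{W'}$ vanishes above degree $N-N_{W'}$. Since $\ell\in(V^*)^{W'}=R^{W'}_1\subseteq R^{W_S}$ and $N-N_S>N-N_{W'}$, we get $\ell^{\,N-N_S}=0$. As $R^{W_S}_{N-N_S}\ne 0$, multiplication by $\ell^{\,N-N_S}$ from $R^{W_S}_0$ to $R^{W_S}_{N-N_S}$ is not bijective, so $\ell$ is not a strong Lefschetz element.

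For the \emph{if} direction, assume no reflection of $W\smallsetminus W_S$ fixes $\ell$. Since $\ell\in(V^*)^{W_S}$ we have $W_S\subseteq\mathrm{Stab}_W(\ell)$, and by Steinberg's theorem $\mathrm{Stab}_W(\ell)$ is generated by the reflections it contains, which all lie in $W_S$ by hypothesis; hence $\mathrm{Stab}_W(\ell)=W_S$. Regard $\ell$ as a real weight and choose $w\in W$ with $\mu:=w\ell$ dominant. Then $\mathrm{Stab}_W(\mu)=wW_Sw^{-1}$ is the standard parabolic $W_J$ with $J=\{\,\alpha\ \text{simple}:\langle\mu,\alpha^\vee\rangle=0\,\}$, and by dominance $\langle\mu,\alpha^\vee\rangle>0$ for all simple $\alpha\notin J$, so $\mathcal L_\mu$ is ample on $G/P_J$. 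By the Hard Lefschetz theorem its class $\mu$ is a strong Lefschetz element of $H^*(G/P_J;\RR)\cong R^{W_J}$. Finally, $W$ acts on $R$ by graded algebra automorphisms, $w^{-1}$ maps $R^{W_J}$ isomorphically onto $R^{w^{-1}W_Jw}=R^{W_S}$, and it sends $\mu$ to $\ell$; transporting the strong Lefschetz property through this isomorphism shows $\ell$ is a strong Lefschetz element of $R^{W_S}$. (With $W_S=\{1\}$ this reproves Theorem~\ref{thm:condition-for-full-flag} for Weyl groups, and reducible $W$ requires nothing extra since $G$ may be taken reducible.)

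I expect the main obstacle to be the bookkeeping in the \emph{if} direction: starting from a $W_S$-fixed vector fixed by no reflection outside $W_S$, one must move it to the dominant chamber, recognize its stabilizer there as a standard parabolic $W_J$ — possibly of a different Coxeter type than $W_S$ — on whose flag variety the associated line bundle is ample, and then check that conjugation by $w$ is genuinely a graded-algebra isomorphism $R^{W_J}\xrightarrow{\ \sim\ }R^{W_S}$ carrying $\mu$ to $\ell$. The delicate parts are all conventional: the dictionary between a dominant weight with stabilizer $W_J$ and an ample line bundle on $G/P_J$, the Borel-type presentation $H^*(G/P_J;\RR)\cong R^{W_J}$ compatible with degree-one classes and characters, and the precise form of Hard Lefschetz on the real ample cone — and this is exactly where the hypothesis that $W$ be a Weyl group is used.
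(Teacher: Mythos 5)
Your proposal is correct, and its two halves relate to the paper differently. The \emph{if} direction is essentially the paper's argument (Theorem~\ref{thm:sufficient-condition-for-parabolic-invariant}): conjugate $\ell$ into the dominant chamber, recognize the conjugated stabilizer as a standard parabolic $W_{S'}$, observe that the image lies in the K\"ahler (ample) cone of $G/P$, apply Hard Lefschetz, and transport back via the $W$-action on $R$. The only difference is packaging: you identify $\mathrm{Stab}_W(\ell)=W_S$ via Steinberg's fixed-point theorem and then use the standard fact that the stabilizer of a dominant vector is the standard parabolic on the simple roots orthogonal to it, whereas the paper proves exactly this by a hands-on analysis of the face $F$ of a Weyl chamber containing $\ell$ and the walls cutting it out, thereby producing $S'$ with $wW_Sw^{-1}=W_{S'}$ without citing Steinberg. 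The \emph{only if} direction is where you genuinely diverge: the paper (Theorem~\ref{thm:necessary-condition-for-parabolic-invariant}) assumes $\ell^{m-m_S}\ne 0$, uses the Poincar\'e pairing on $R$ together with the anti-invariance of $R_m$ to manufacture a nonzero $W'$-anti-invariant in degree $m_S$, and contradicts the fact that anti-invariants of $W'=\langle W_S,s\rangle$ have degree at least the number of reflections of $W'$; you instead show directly that $\ell^{\,m-m_S}=0$, by noting $\ell\in R^{W'}$ and that $R^{W'}$ vanishes above degree $m-N_{W'}<m-m_S$, via Chevalley freeness of $\RR[V]$ over $\RR[V]^{W'}$ and the resulting factorization of Hilbert series. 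Your route is arguably more direct (it exhibits the nilpotency rather than a contradiction through duality), and like the paper's it works for arbitrary finite Coxeter groups; the one point to make explicit if you write it up is that $W'$ is in general only a reflection subgroup, not a (standard) parabolic, so you must invoke Chevalley's theorem for an arbitrary reflection subgroup acting on $V$ and check that invariants commute with passing to the coinvariant ring (i.e.\ $R^{W'}=\RR[V]^{W'}/I_W^+\RR[V]^{W'}$, via the Reynolds operator) before reading off that the top degree of $R^{W'}$ is $m-N_{W'}$; both halves also share the small, standard caveat (glossed in the paper as well) that Hard Lefschetz is applied to a real class in the open ample cone, not necessarily an integral one.
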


The notion of the strong Lefschetz
property for commutative Artinian graded rings is an abstraction of a property
of the cohomology ring $H^*(X, \RR)$ of a compact K\"ahler manifold $X.$
The Hard Lefschetz Theorem
(Proposition~\ref{prop:hard-lefschetz-theorem}. 
  see \cite{MR507725}, e.g.)
tells us that the multiplication by the class of
the K\"ahler form induces an isomorphism 
between $H^i(X, \RR)$ and $H^{2\dim_{\CC}X-i}(X, \RR).$
In view of such a property of the cohomology ring,
the strong Lefschetz property is defined as follows:
\begin{definition}
\label{defn:slp}
A graded ring $R = \bigoplus_{d=0}^m R_d$ 
having a symmetric Hilbert function
is said to have
the {\em strong Lefschetz property},
if there exists an element $\ell \in R_1$ such that
the multiplication map
$\times \ell^{m-2i}: R_i \to R_{m-i}$ ($f \mapsto \ell^{m-2i}f$)
is bijective for every $i = 0,1,\ldots, \lfloor m/2 \rfloor$.
In this case, $\ell$ is called a {\em strong Lefschetz element}.
\end{definition}
\noindent
It should be remarked that
we can define the strong Lefschetz property 
for graded rings with non-symmetric Hilbert functions
(see \cite{MR1970804}, e.g.).
In this paper, we only consider the strong Lefschetz property
for symmetric Hilbert functions.

Our interest is the condition for an element in $R_1$ 
to be a strong Lefschetz element.
 It is not a trivial problem even in the case of
the cohomology ring $H^*(X,\RR)$ of a K\"ahler manifold $X.$
In fact, the K\"ahler cone of $X$ is only
part of the set of the strong Lefschetz elements in $H^*(X,\RR).$

When $W$ is a Weyl group
(i.e. crystallographic Coxeter group),
its coinvariant ring $R$ is isomorphic to 
the cohomology ring of the flag variety $G/B$,
where $G$ is the Lie group corresponding to $W$,
and $B$ is a Borel subgroup of $G$.
Moreover the parabolic invariant $R^{W_S}$ is isomorphic to
the cohomology ring of the partial flag variety $G/P$,
where $P$ is the parabolic subgroup of $G$
whose Weyl group is $W_S$.
Thus $R$ and $R^{W_S}$ have the strong Lefschetz property
for Weyl groups $W$ from the geometric result.
By an additional algebraic argument,
Theorem~\ref{thm:condition-for-full-flag}
(resp. Theorem~\ref{thm:condition-for-partial-flag})
determines the whole set of the strong Lefschetz elements of $R$
(resp. $R^{W_S}$)
for Weyl groups $W$,
and the set of the strong Lefschetz elements
includes the K\"ahler cone
as one of the connected components.

When $W$ is not a Weyl group
(i.e. non-crystallographic Coxeter group),
we have to study the strong Lefschetz property
of the coinvariant ring $R$ by algebraic arguments,
since we do not have geometric realizations 
of the coinvariant rings.
For the non-crystallographic part 
of Theorem~\ref{thm:condition-for-full-flag},
we give the necessary and sufficient condition
for strong Lefschetz elements of $R$
for Coxeter groups of type H$_3$ or I$_2(m)$. 
In particular, Theorem~\ref{thm:condition-for-full-flag}
proves the strong Lefschetz property of the coinvariant rings
of types H$_3$ and I$_2(m)$.
For the type H$_4$,
the strong Lefschetz property of the coinvariant ring
is proved by \cite{MR2371985},
but the necessary and sufficient condition 
for strong Lefschetz elements is not given yet.

This paper is organized as follows:
In Section~\ref{sec:coinvariant-ring}
we review the coinvariant rings of finite Coxeter groups,
and we give the necessary condition for the strong Lefschetz elements
in Section~\ref{sec:necessary-condition}.
In Section~\ref{sec:weyl-group}, 
we review the coinvariant rings of Weyl groups
from a geometric viewpoint.
The necessary condition given 
in Section~\ref{sec:necessary-condition}
is proved to be also sufficient 
in Section~\ref{sec:weyl-group} for Weyl groups
and in Section~\ref{sec:sufficient-condition} for Coxeter groups
of types H$_3$ and I$_2(m)$.
The proof of Theorem~\ref{thm:condition-for-full-flag} is
completed here.
In Section~\ref{sec:parab-invariant} we study
the strong Lefschetz elements of parabolic invariants
of coinvariant rings of Coxeter groups,
and prove Theorem~\ref{thm:condition-for-partial-flag}.

We would like to thank Hideaki Morita who posed us 
a problem on the main theorems.
We would also like to thank Junzo Watanabe for his valuable comments 
on this study.
The first author is partially supported 
by JSPS Grant-in-Aid for Scientific Research \#{19740013}.

\section{The coinvariant rings of finite Coxeter groups}
\label{sec:coinvariant-ring}

In this section 
we review the structure of coinvariant rings of finite
Coxeter groups briefly.
Irreducible finite Coxeter groups are classified as
Table~\ref{table:coxeter-group}
(see \cite{MR1066460} for instance).
Irreducible Coxeter groups of types from A to G are
said to be {\em crystallographic},
and the others are said to be {\em non-crystallographic}.
Crystallographic ones are also called {\em Weyl groups}.
Finite Coxeter groups decompose into direct products
of irreducible Coxeter groups.

Let $W$ be a finite Coxeter group
generated by $n$ simple reflections $s_1, s_2, \ldots, s_n$
acting on an $n$-dimensional $\RR$-vector space $V$.
Let $l(w)$ be the {\em length} of $w \in W$,
which is by definition the length $l$ of the shortest expression
$w = s_{i_1} s_{i_2} \cdots  s_{i_l}$ for $w$.
It is known that there exists a unique element $w_0 \in W$
of maximum length,
which is called the {\em longest element},
and that the length of $w_0$ is equal to the number of reflections
in $W$
(see \cite{MR1066460}, e.g.).
\begin{table}
  \caption{finite Coxeter groups}
  \label{table:coxeter-group}
  $
  \begin{array}{|l|cccccc|}
    \hline
    \text{type} &
    \mathrm{A}_n & \mathrm{B}_n & \mathrm{D}_n & \mathrm{E}_6 &
    \mathrm{E}_7 & \mathrm{E}_8
    \\ \hline
    \rule{0pt}{2.3ex}
    \# W &
    (n+1)! & 2^n n! & 2^{n-1} n! & 2^7 3^4 5 &
    2^{10} 3^4 5~7 & 2^{14} 3^5 5^2 7
    \\ \hline
    \rule{0pt}{2.3ex}
    \#\text{(reflections)} &
    n(n+1)/2 & n^2 & n(n-1) & 36 &
    63 & 120
    \\
    \hline
  \end{array}
  $
  \smallskip \\
  $
  \begin{array}{|l|cc||ccc|}
    \hline
    \text{type} &
    \mathrm{F}_4 & \mathrm{G}_2 & \mathrm{H}_3 & \mathrm{H}_4 &
    \mathrm{I}_2(m)
    \\ \hline
    \rule{0pt}{2.3ex}
    \# W &
    2^7 3^2 & 12 & 120 & 2^6 3^2 5^2 & 2m
    \\ \hline
    \#\text{(reflections)} &
    24 & 6 & 15 & 60 & m
    \\
    \hline
  \end{array}
  $
\end{table}
Let $J$ be the graded ideal of the polynomial ring $\RR[V]$
generated by the $W$-invariant polynomials without constant terms,
and $R = \RR[V]/J$ the {\em coinvariant ring} of $W$.
Then it is known that the dimension of the vector subspace $R_d$
of homogeneous degree $d$ is equal to
the number of elements in $W$ with length $d$,
and hence one has the homogeneous decomposition of $R$
\cite{MR0429933,MR630960}:
\[
  R = \bigoplus_{d=0}^m R_d
  \qquad
  (m = l(w_0), \quad \dim_{\RR} R_d = \#\{w \in W \;;\; l(w)=d\} ).
\]
The Hilbert function of $R$ is {\em symmetric},
that is,
$\dim_{\RR} R_d = \dim_{\RR} R_{m-d}$ 
for $d = 0, 1, \ldots, \lfloor m/2 \rfloor$.

Let $W$ be a Weyl group, and $\Delta$ its root system.
Let $G$ be a connected, simply-connected and semi-simple
complex Lie group corresponding to $\Delta$,
and $B$ its Borel subgroup.
On the one hand,
we have the cohomology ring
$H^*(G/B, \RR)$ of the flag variety $G/B$ .
It follows from the Hard Lefschetz theorem
(Proposition~\ref{prop:hard-lefschetz-theorem})
that $H^*(G/B, \RR)$ has the strong Lefschetz property.
On the other hand,
we have the coinvariant ring
$R = \RR[\mathfrak{h}^*]/J$ of $W$,
where $\mathfrak{h}^*$ is the real vector space 
spanned by the root system $\Delta$,
and therefore $W$ acts on $\mathfrak{h}^*$.
It is known that $\mathfrak{h}^*$ is canonically 
isomorphic to $H^2(G/B, \RR)$,
and
\begin{align}
  \label{eq:coinvariant-ring-isom-to-cohomology-ring}
  R \simeq H^*(G/B, \RR)
  \qquad
  \text{with~}
  R_d \simeq H^{2d}(G/B, \RR),
\end{align}
(see \cite{MR0051508},
  \cite[1.3.~Proposition]{MR0429933},
  \cite[Theorem~2.7]{MR630960}, e.g.).
In particular,
it turns out that the coinvariant ring $R$ 
has the strong Lefschetz property.

When $W$ is a non-crystallographic Coxeter group,
there is no geometric realization of the coinvariant ring.
However, the coinvariant rings have the strong Lefschetz property
by Subsections~\ref{subsec:I2(m)} and \ref{subsec:H3}
for H$_3$ and I$_2(m)$
and by \cite{MR2371985} for H$_4$.
Summarizing the above arguments,
we have the following:
\begin{proposition}
\label{prop:coinvariant-ring-have-SLP}
The coinvariant rings of finite Coxeter groups have 
the strong Lefschetz property.
\end{proposition}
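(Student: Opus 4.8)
The plan is to reduce the proposition to the case of \emph{irreducible} Coxeter groups and then dispose of the three resulting families of cases --- the classical and exceptional Weyl groups, the types H$_3$ and I$_2(m)$, and the type H$_4$ --- by invoking, respectively, the geometry of flag varieties, the explicit computations of the later sections, and the literature. First I would carry out the reduction. A finite Coxeter group decomposes as a direct product $W = W^{(1)} \times \cdots \times W^{(k)}$ of irreducible Coxeter groups $W^{(j)}$ acting on a direct sum decomposition $V = V^{(1)} \oplus \cdots \oplus V^{(k)}$. Then $\RR[V]$ is the tensor product $\RR[V^{(1)}] \otimes_{\RR} \cdots \otimes_{\RR} \RR[V^{(k)}]$, the ring of $W$-invariants is the tensor product of the rings of $W^{(j)}$-invariants, and hence the coinvariant ideal $J$ is generated by the (extensions of the) ideals $J^{(j)}$, so that
\[
  R \;\simeq\; R^{(1)} \otimes_{\RR} \cdots \otimes_{\RR} R^{(k)},
\]
where $R^{(j)}$ denotes the coinvariant ring of $W^{(j)}$. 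Thus it suffices to prove that (i) each $R^{(j)}$ has the strong Lefschetz property, and (ii) the strong Lefschetz property is inherited by finite tensor products over $\RR$ of graded Artinian algebras with symmetric Hilbert functions.

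For (i) I would split into the three cases according to Table~\ref{table:coxeter-group}. If $W^{(j)}$ is a Weyl group (types A through G), the isomorphism \eqref{eq:coinvariant-ring-isom-to-cohomology-ring} identifies $R^{(j)}$ with $H^*(G/B,\RR)$, so the Hard Lefschetz theorem (Proposition~\ref{prop:hard-lefschetz-theorem}), applied to the class of a K\"ahler form on $G/B$, exhibits a strong Lefschetz element. If $W^{(j)}$ is of type I$_2(m)$ or H$_3$, I would appeal to the explicit presentations of the coinvariant ring obtained in Subsections~\ref{subsec:I2(m)} and \ref{subsec:H3}. If $W^{(j)}$ is of type H$_4$, the assertion is proved in \cite{MR2371985}.

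For (ii) the plan is to use the $\mathfrak{sl}_2$-module reformulation of the strong Lefschetz property: a symmetric graded Artinian $\RR$-algebra $A = \bigoplus_{d=0}^{m_A} A_d$ admits a strong Lefschetz element $\ell_A \in A_1$ precisely when multiplication by $\ell_A$ can be completed --- together with the suitably shifted grading operator and an appropriate lowering operator --- to an action of $\mathfrak{sl}_2$ on $A$ whose raising operator is $\times \ell_A$. Given such structures on $A$ and $B$ with elements $\ell_A$ and $\ell_B$, the space $A \otimes_{\RR} B$ is the tensor product $\mathfrak{sl}_2$-module, and its raising operator is $(\times \ell_A) \otimes \mathrm{id} + \mathrm{id} \otimes (\times \ell_B)$, that is, multiplication by $\ell_A \otimes 1 + 1 \otimes \ell_B \in (A \otimes_{\RR} B)_1$; since a finite-dimensional $\mathfrak{sl}_2$-module over $\RR$ is a direct sum of strings, this operator is bijective in the degrees required by Definition~\ref{defn:slp}, so $\ell_A \otimes 1 + 1 \otimes \ell_B$ is a strong Lefschetz element of $A \otimes_{\RR} B$.

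The main obstacle is step (ii): one must check that the lowering operator on a strong Lefschetz algebra can be chosen so that, together with $\times \ell$ and the shifted grading, it genuinely satisfies the $\mathfrak{sl}_2$ relations, and that the string decomposition over $\RR$ behaves as it does over $\CC$ --- after which bijectivity of the raising operator on the tensor product is automatic. Everything else in the argument is either the Hard Lefschetz theorem or a finite, type-by-type verification.
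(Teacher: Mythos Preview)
Your proposal is correct and follows essentially the same route as the paper: the paper treats this proposition as a summary of the preceding discussion, handling irreducible Weyl groups via the isomorphism~\eqref{eq:coinvariant-ring-isom-to-cohomology-ring} and the Hard Lefschetz theorem, types I$_2(m)$ and H$_3$ via Subsections~\ref{subsec:I2(m)} and~\ref{subsec:H3}, and H$_4$ via \cite{MR2371985}. The only difference is that you spell out the tensor-product reduction to the irreducible case (via the $\mathfrak{sl}_2$ argument), whereas the paper simply records that finite Coxeter groups decompose into irreducible factors and takes the passage to tensor products as understood.
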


\section{The necessary condition for strong Lefschetz elements}
\label{sec:necessary-condition}

As noted in Proposition~\ref{prop:coinvariant-ring-have-SLP},
the coinvariant rings $R$ of finite Coxeter groups $W$
have the strong Lefschetz property.
In this section 
we give the necessary condition for the strong Lefschetz elements
of $R$.
\begin{theorem}
\label{thm:necessary-condition}
Let $R = \bigoplus_{d=0}^m R_d$ be the coinvariant ring
of a finite Coxeter group $W$
(possibly non-crystallographic).
If $\ell \in R_1$ is a strong Lefschetz element,
then $\ell$ is not fixed by any reflections of $W$.
\end{theorem}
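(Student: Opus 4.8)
The plan is to prove the contrapositive: if $\ell \in R_1$ is fixed by some reflection $s \in W$, then $\ell$ cannot be a strong Lefschetz element. The key idea is to exploit the invariant theory of the rank-one parabolic subgroup $\langle s \rangle$. Let $\alpha \in V^*$ be the linear form defining the reflecting hyperplane of $s$, so that $s$ acts on $V^*$ by negating $\alpha$ and fixing its complement; then $\ell$ being fixed by $s$ means $\ell$ lies in the span of the $s$-invariant linear forms, i.e. $\ell$ is a linear combination of the coordinates orthogonal to $\alpha$. The crucial algebraic fact is that $\alpha^2$ is an invariant of $\langle s \rangle$ acting on $\RR[V]$, and more relevantly, I would look at how $\alpha$ behaves in the coinvariant ring $R$.

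\medskip

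First I would record the structural input: the coinvariant ring $R$ has symmetric Hilbert function with socle in degree $m = l(w_0)$, and $R$ is a Poincar\'e duality algebra (this is standard for coinvariant rings, being complete intersections / Gorenstein). Next, the heart of the argument is to produce, from the reflection $s$ fixing $\ell$, a nonzero element killed by an inappropriately small power of $\ell$. The natural candidate is built from $\alpha$: I would argue that in $R$ the element $\alpha$ satisfies a relation forcing $\ell^{k}$ applied to a suitable element $\alpha \cdot (\text{something})$ to vanish before degree $m$ is reached, violating injectivity of $\times \ell^{m-2i}$ on $R_i$ for the relevant $i$. Concretely, since $s$ fixes $\ell$ and negates $\alpha$, for any product $\ell^a \alpha$ we have $s(\ell^a \alpha) = -\ell^a \alpha$; the $s$-anti-invariant part of $R$ is an ideal-like subspace, and by comparing with the $s$-invariant subring $R^{\langle s\rangle}$ one gets dimension constraints. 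The anti-invariants of $R$ under $s$ form a free module of rank one over $R^{\langle s \rangle}$ generated by $\alpha$, by the Chevalley-type theory applied to the pair $(W, \langle s\rangle)$ — or more elementarily because $R \cong R^{\langle s\rangle} \oplus \alpha R^{\langle s\rangle}$ as $R^{\langle s\rangle}$-modules. The top degree of the anti-invariant part is then $m - 1$ (since $R^{\langle s\rangle}$ has top degree $m-1$: its socle degree drops by one, $1$ being the length of the longest element of $\langle s \rangle$). Since $\ell \in R^{\langle s\rangle}$, multiplication by any power of $\ell$ preserves the anti-invariant subspace $\alpha R^{\langle s\rangle}$, which lives in degrees $1$ through $m-1$ only; in particular $\times \ell^{m-2}: (\alpha R^{\langle s\rangle})_1 \to R_{m-1}$ lands inside $(\alpha R^{\langle s\rangle})_{m-1}$, but one checks this cannot be surjective onto all of $R_{m-1}$ unless the invariant part $R^{\langle s\rangle}$ is trivial in degree $m-1$, while by symmetry the SLP requires $\times\ell: R_1 \to R_2$ already to be injective and iterating forces a contradiction with the dimension count $\dim R_1 = \dim R_{m-1}$ versus $\dim (\alpha R^{\langle s\rangle})_1 < \dim R_1$.

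\medskip

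Let me streamline the contradiction: set $\ell$ fixed by $s$, write $V^* = \RR\alpha \oplus H$ with $H = (V^*)^s$, so $\ell \in H$. The element $\alpha \in R_1$ is anti-invariant. Now $R$ satisfies Poincar\'e duality in degree $m$; the pairing $R_1 \times R_{m-1} \to R_m$ is perfect. The point is that $\alpha \cdot \xi$ for $\xi \in R_{m-2}$ must be evaluated against $R_m$ via $\alpha \cdot R_{m-1}$, but the socle $R_m$ is $s$-invariant up to the sign $\det(s) = -1$, hence $R_m$ is the $\det$-eigenspace, and pairing an anti-invariant degree-$1$ element with something to land in $R_m$ requires the partner to be $s$-invariant. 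So $\alpha$ pairs to zero with $H \cdot R_{m-2} \cap R_{m-1}$-type elements. The cleanest route: if $\ell$ were a strong Lefschetz element, then $\times \ell^{m-2}\colon R_1 \to R_{m-1}$ is bijective, so in particular there is a unique $f \in R_1$ with $\ell^{m-2} f = \alpha \cdot g$ for a prescribed $g$; tracking the $s$-action shows $\ell^{m-2} f$ is $s$-anti-invariant iff $f$ is (as $\ell^{m-2}$ is $s$-invariant), so bijectivity of $\times\ell^{m-2}$ restricts to a bijection between the anti-invariant parts $R_1^{-} \to R_{m-1}^{-}$. But $\dim R_1^{-} = 1$ (spanned by $\alpha$) while $\dim R_{m-1}^{-} = \dim R^{\langle s\rangle}_{m-1} \cdot$... no — rather $\dim R_{m-1}^{-} = \dim (\alpha R^{\langle s\rangle})_{m-1} = \dim R^{\langle s\rangle}_{m-2}$. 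For the map to be bijective we would need $\dim R^{\langle s\rangle}_{m-2} = 1$; but $R^{\langle s\rangle}$ has Hilbert series equal to that of the coinvariant algebra of $W$ relative to $\langle s\rangle$, with top degree $m-1$ and Hilbert function symmetric about $(m-1)/2$, so $\dim R^{\langle s\rangle}_{m-2} = \dim R^{\langle s\rangle}_1 = n-1 \geq 1$, with equality only when $n=2$ — and even then a direct check in type $\mathrm{I}_2(m)$ kills it. \textbf{The main obstacle} I anticipate is making the claim ``$R \cong R^{\langle s\rangle} \oplus \alpha R^{\langle s\rangle}$ as graded $R^{\langle s\rangle}$-modules with top degree of $R^{\langle s\rangle}$ equal to $m-1$'' fully rigorous: this requires knowing that $R$, as a module over the subalgebra of $\langle s\rangle$-invariants, is free of rank $2 = |\langle s\rangle|$, which is a relative version of the Chevalley theorem (valid because $\langle s\rangle$ is itself a reflection group on $V^*$ and $R$ is the full coinvariant ring of $W \supseteq \langle s\rangle$). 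Once that module structure and the degree bound are in hand, the Poincar\'e-duality/eigenspace bookkeeping and the final dimension count are routine, with the small cases $n \le 2$ handled by explicit computation using the standard presentation of the $\mathrm{I}_2(m)$ coinvariant ring.
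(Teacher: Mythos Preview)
Your approach is essentially correct but significantly more elaborate than necessary, and you actually mention the key observation without exploiting it directly. You note in passing that ``the socle $R_m$ is $s$-invariant up to the sign $\det(s) = -1$, hence $R_m$ is the $\det$-eigenspace.'' The paper's proof uses exactly this and nothing else: if $\ell$ is fixed by a reflection $s$, then $\ell^m \in R_m$ is $s$-invariant; but every element of the one-dimensional space $R_m$ is $s$-anti-invariant; hence $\ell^m = 0$, contradicting the bijectivity of $\times\ell^m \colon R_0 \to R_m$ required by the strong Lefschetz property. That is the entire argument.

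Your route instead compares the $s$-anti-invariant parts of $R_1$ and $R_{m-1}$ under $\times\ell^{m-2}$, obtaining a contradiction from $\dim R_1^{-} = 1 \ne n-1 = \dim R_{m-1}^{-}$ when $n \ge 3$. This works (the dimension of $R_{m-1}^-$ already follows from Poincar\'e duality together with the fact that $R_m$ carries the sign character, so the module-theoretic digression about $R \cong R^{\langle s\rangle} \oplus \alpha R^{\langle s\rangle}$ and the symmetry of the Hilbert series of $R^{\langle s\rangle}$ is not really needed), but it leaves the rank-two case to a separate computation. That deferred case is most cleanly handled by looking at $\times\ell^m \colon R_0 \to R_m$ and comparing $\dim R_0^- = 0$ with $\dim R_m^- = 1$ --- which is exactly the paper's argument. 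In other words, the paper applies your eigenspace bookkeeping at the extremes $i=0$ and $i=m$ rather than at $i=1$ and $i=m-1$, and thereby obtains a uniform two-line proof with no case analysis and no appeal to the relative Chevalley structure that you flag as the main obstacle.
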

\begin{proof}
By \cite{MR0429933}, \cite{MR630960},
the subspace $R_m$ of the maximum degree is one-dimensional,
and spanned by an anti-invariant element,
where an element $f \in R$ is said to be anti-invariant
if $f$ is sent to $-f$ by any reflections of $W$.

Take a strong Lefschetz element $\ell \in R_1$,
and assume that $\ell$ is fixed by a reflection $s \in W$.
On the one hand $\ell^m = 0$,
since $\ell^m \in R_m$ is also fixed by $s$,
and each element in $R_m$ is anti-invariant.
On the other hand $\ell^m \ne 0$,
since the multiplication map $\times \ell^m: R_0 \to R_m$
is bijective by the definition of strong Lefschetz elements.
This is a contradiction.
We thus have proved that
a strong Lefschetz element is not fixed by any reflections.
\end{proof}

\begin{example}[type A$_{n-1}$]
\label{ex:An-1}
Let $W$ be the symmetric group $S_n$ on $n$ letters,
and acting on the polynomial ring $\RR[x_1, x_2, \ldots, x_n]$
via permutation of the variables.
Let $R$ be the coinvariant ring $\RR[x_1,x_2,\ldots,x_n]/J$,
where $J$ is the graded ideal generated
by the symmetric polynomials without constant terms.
In this case, by Theorem~\ref{thm:necessary-condition},
if $a_1 x_1 + a_2 x_2 + \cdots + a_n x_n \in R_1$ is a
strong Lefschetz element,
then $a_i \ne a_j$ for any $i \ne j$.
\qed
\end{example}

\section{Coinvariant rings of Weyl groups as the cohomology rings of flag
varieties}
\label{sec:weyl-group}

In this section, we study the strong Lefschetz elements
of the coinvariant rings of the crystallographic Coxeter groups
(i.e. Weyl groups).
If the
Coxeter group $W$ is crystallographic, then its coinvariant ring is
isomorphic to the cohomology ring of the corresponding flag variety
(Equation~(\ref{eq:coinvariant-ring-isom-to-cohomology-ring})).
The argument in this section is based on the geometric property of the
flag variety, so it is not applicable to the case of non-crystallographic
Coxeter groups. Here let us remind of the Hard Lefschetz Theorem
(see \cite{MR507725}, e.g.).
\begin{proposition}[The Hard Lefschetz Theorem]
\label{prop:hard-lefschetz-theorem}
Let $(X,\omega)$ be a compact K\"ahler manifold.
Denote by $L$ the multiplication operator by the K\"ahler class $[\omega] \in
H^2(X,\RR).$ Then
\[ L^{\dim_{\CC}X-i}:H^i(X,\RR) \rightarrow H^{2\dim_{\CC}X-i}(X,\RR) \]
is an isomorphism for $i=0,\ldots , \dim_{\CC}X.$
\end{proposition}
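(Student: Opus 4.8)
The plan is to deduce the statement from Hodge theory together with the elementary representation theory of $\mathfrak{sl}_2(\CC)$. First I would work at the level of differential forms: on a Hermitian manifold $(X,\omega)$ of complex dimension $n$, let $L$ denote the operator $\eta \mapsto \omega \wedge \eta$ on the space $\mathcal{A}^\bullet(X)$ of complex-valued forms, let $\Lambda = L^*$ be its pointwise adjoint with respect to the Hodge inner product, and let $H$ be the operator acting on $k$-forms as multiplication by $k-n$. A pointwise computation in the exterior algebra of a Hermitian vector space — using only that $\omega$ is a real $(1,1)$-form of maximal rank — yields the commutation relations
\[
  [L,\Lambda]=H,\qquad [H,L]=2L,\qquad [H,\Lambda]=-2\Lambda,
\]
so that $(L,\Lambda,H)$ defines a representation of $\mathfrak{sl}_2(\CC)$ on $\mathcal{A}^\bullet(X)$ acting fibrewise.

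Next I would bring in the K\"ahler hypothesis. The K\"ahler identities $[\Lambda,\bar\partial]=-i\,\partial^{*}$ and $[\Lambda,\partial]=i\,\bar\partial^{*}$ — whose proof reduces, via the fact that a K\"ahler metric osculates the flat metric to second order at each point, to the corresponding identities on $\CC^n$ with the standard metric — imply that $L$ and $\Lambda$ commute with the Laplacian $\Delta=\partial\partial^{*}+\partial^{*}\partial$. Hence $L$ and $\Lambda$ preserve the space $\mathcal{H}^\bullet(X)$ of harmonic forms, and the $\mathfrak{sl}_2$-action descends to $\mathcal{H}^\bullet(X)$. By the Hodge theorem the harmonic projection gives an isomorphism $\mathcal{H}^i(X)\xrightarrow{\ \sim\ }H^i(X,\RR)$ (everything being defined over $\RR$, since $\omega$ is real), under which $L$ becomes cup product with the class $[\omega]$. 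Thus $H^\bullet(X,\RR)$ is a finite-dimensional $\mathfrak{sl}_2(\CC)$-module on which $H$ acts on $H^i(X,\RR)$ by $(i-n)\cdot\mathrm{id}$.

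Finally I would invoke the structure theory of finite-dimensional $\mathfrak{sl}_2(\CC)$-modules: in any such module $V$, writing $V_j$ for the $j$-eigenspace of $H$, the raising operator satisfies $L^{k}\colon V_{-k}\to V_{k}$ bijective for every $k\ge 0$. Applying this with $V=H^\bullet(X,\RR)$ and $k=n-i$ (so that $V_{-k}=H^i(X,\RR)$ and $V_{k}=H^{2n-i}(X,\RR)$) gives that $L^{n-i}\colon H^i(X,\RR)\to H^{2n-i}(X,\RR)$ is an isomorphism for $0\le i\le n$, which is the claim. The main obstacle in making this rigorous is the analytic input of the second step — establishing the K\"ahler identities and deducing that $L$ and $\Lambda$ commute with $\Delta$; by comparison, the algebraic ingredients (the $\mathfrak{sl}_2$ commutation relations and the representation theory) are routine.
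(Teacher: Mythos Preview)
Your proposal outlines the standard proof of the Hard Lefschetz Theorem via the K\"ahler identities and the resulting $\mathfrak{sl}_2(\CC)$-action on harmonic forms, and the argument is correct as sketched (the only point requiring a moment's care is that the Laplacian you write, $\Delta=\partial\partial^{*}+\partial^{*}\partial$, differs from the de Rham Laplacian by a factor of $2$ on a K\"ahler manifold, which is harmless for the conclusion). Note, however, that the paper does not actually prove this proposition: it is quoted as a classical result with a reference to the literature, so there is no in-paper proof to compare against.
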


Next, let us recall Kleiman's criterion for ampleness. For a projective
complex manifold $X,$ we define
\[ N^1(X)={\rm Pic}(X)_{\RR}/(\textrm{numerical equivalence}) , \]
where ${\rm Pic}(X)_{\RR}$ is the Picard group of $X$.
Let $Z_1(X)_{\RR}$ be the $\RR$-linear space of the formal linear combinations
of curves on $X.$ We also define
\[ N_1(X)=Z_1(X)_{\RR}/(\textrm{numerical equivalence}) . \]
Denote by $\overline{{\rm NE}}(X)$ the closed convex cone 
in $N_1(X)$ spanned by the images of curves in $Z_1(X).$
\begin{proposition}[Kleiman's criterion {\cite{MR0206009}}]
An element $L\in N^1(X)$ is contained in the ample cone if and only if
$(L,C)>0$ for all $C\in \overline{{\rm NE}}(X)\setminus \{ 0 \}.$
\end{proposition}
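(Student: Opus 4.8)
The plan is to prove the two implications of the equivalence separately, taking as known the Nakai--Moishezon criterion for ampleness, Bertini's irreducibility theorem, the openness of the ample cone, and the perfectness of the pairing $N^1(X)\times N_1(X)\to\RR$ built into the definitions above. The implication ``$L$ ample $\Rightarrow (L,z)>0$ for all $0\ne z\in\overline{\rm NE}(X)$'' is the soft one. I would first note that $\overline{\rm NE}(X)$ is a pointed cone: if $z$ and $-z$ both lay in it, then pairing with all ample classes, which span $N^1(X)$, would give $z=0$ by perfectness of the pairing; hence for a fixed ample class $A$ the affine slice $\Sigma=\{z\in\overline{\rm NE}(X):(A,z)=1\}$ is compact. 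Choosing $M\gg0$ so that $ML-A$ is ample (possible since the ample cone is open around $L$) yields $(ML-A,\gamma)\ge0$, i.e. $(A,\gamma)\le M(L,\gamma)$, for every irreducible curve $\gamma$; dividing by the $A$-degree and passing to the closed convex hull, this bound persists on $\Sigma$, so $(L,\cdot)\ge 1/M$ on $\Sigma$ and hence $(L,z)\ge\frac1M(A,z)>0$ on $\overline{\rm NE}(X)\setminus\{0\}$.

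The reverse implication is the substance. Assume $(L,z)>0$ for every $0\ne z\in\overline{\rm NE}(X)$; then $(L,\gamma)\ge0$ for all curves, so $L$ is nef. The first ingredient is the standard lemma that a nef class $D$ has $(D^k\cdot V)\ge0$ for every irreducible $k$-dimensional subvariety $V$: restricting to $V$ one may take $V=X$, and then by induction on $\dim X$ an expansion of $((D+tH)^k\cdot X)$ for very ample $H$ and rational $t>0$ has all lower-order coefficients $\ge0$ (they are intersections of $D$ with lower-dimensional generic complete intersections, nonnegative by induction) while the whole expression is $>0$ by Nakai--Moishezon applied to the ample class $D+tH$; letting $t\to0^+$ gives $(D^k\cdot X)\ge0$. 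Now suppose $L$ were not ample. By Nakai--Moishezon some irreducible $V$ of dimension $k\ge1$ has $(L^k\cdot V)\le0$, hence $=0$ by the lemma; take $V$ of minimal such dimension. If $k=1$, then $[V]$ is a nonzero class of $\overline{\rm NE}(X)$ with $(L,[V])=0$, contradicting the hypothesis. If $k\ge2$, a generic hyperplane section $W$ of $V$ is irreducible of dimension $k-1$ by Bertini, so $(L^{k-1}\cdot W)=(L^{k-1}\cdot H\cdot V)>0$: it is $\ge0$ by the lemma and nonzero by minimality of $k$. Then the $1$-cycle class $\alpha=L^{k-1}\cap[V]$ lies in $\overline{\rm NE}(X)$, because $(L+tH)^{k-1}\cap[V]$ is represented by an effective curve for rational $t>0$ (a generic complete intersection inside $V$) and $\alpha$ is its limit as $t\to0^+$; and $\alpha\ne0$ since $(H\cdot\alpha)=(L^{k-1}\cdot W)>0$. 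Hence $0\ne\alpha\in\overline{\rm NE}(X)$, so $(L\cdot\alpha)>0$ by hypothesis, contradicting $(L\cdot\alpha)=(L^k\cdot V)=0$.

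The main obstacle is the reverse direction, and inside it the nef-intersection lemma together with the extraction of the class $\alpha$. The lemma already bundles Nakai--Moishezon, Bertini, and a limiting argument into its inductive step; and the crucial point that $\alpha$ is \emph{nonzero} is precisely where the minimality of $\dim V$ is used, funnelled through a Bertini hyperplane section. The forward direction and the bookkeeping with intersection numbers and cones are routine by comparison.
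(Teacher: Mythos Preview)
The paper offers no proof of this proposition; it is quoted from Kleiman's 1966 paper and used as a black box in the discussion of flag varieties. Your outline is the standard argument via Nakai--Moishezon, and its overall architecture is correct.

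That said, there is a circularity in your sketch of the nef-intersection lemma. You write that $((D+tH)^{\dim X})>0$ ``by Nakai--Moishezon applied to the ample class $D+tH$'', but for nef $D$ and small $t>0$ the ampleness of $D+tH$ is not yet available: verifying Nakai's top-dimensional inequality leads straight back to the sign of $(D^{\dim X})$, which is precisely what you are proving. The standard repair (Kleiman's original argument) reparametrises as $P(t)=((tD+H)^n)$, starting at the ample class $H$: Nakai holds on proper subvarieties by induction on dimension, while $P(0)=(H^n)>0$ and $P'(t)=n((tD+H)^{n-1}\cdot D)\ge0$ wherever $tD+H$ is already known to be ample, so a continuity argument excludes any first zero of $P$ on $(0,\infty)$; hence $tD+H$ is ample for all $t\ge0$ and $(D^n)=\lim_{t\to\infty}P(t)/t^n\ge0$. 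With this lemma secured, your later claim that $(L+tH)^{k-1}\cap[V]$ is represented by an effective curve for $t>0$ is also justified, and the rest of your argument goes through. (A smaller slip: pointedness of $\overline{\mathrm{NE}}(X)$ alone does not make the slice $\Sigma$ compact; you need $(A,z)>0$ for every nonzero $z\in\overline{\mathrm{NE}}(X)$, which follows directly from openness of the ample cone around $A$.)
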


Now we consider the flag variety $G/B$
corresponding to a Weyl group $W$
as in Section~\ref{sec:coinvariant-ring}.
Let $\Delta$ be the root system corresponding to $W$,
and $\Sigma$ the set of the simple roots of $\Delta$
determined by the Borel subgroup $B$.
Let $\mathfrak{h}^*$ be the real vector space spanned by $\Delta$, 
which is canonically isomorphic to $H^2(G/B, \RR)$,
and $\mathfrak{h}$ the dual space of $\mathfrak{h}^*$,
which is canonically isomorphic to $H_2(G/B, \RR)$.
More generally,
if $P$ is a parabolic subgroup of $G$ and $\Sigma_P \subset \Sigma$ 
is the corresponding subset, then we have 
\[ H^2(G/P, \mathbb{R})=\mathfrak{h}_P^*:=\bigoplus_{\alpha \not\in \Sigma_P}
\mathbb{R} \cdot \alpha \subset \mathfrak{h}^*, \]
\[ H_2(G/P, \mathbb{R})=\mathfrak{h}_P:=\mathfrak{h}/
\bigoplus_{\alpha^{\vee} \in \Sigma_P^{\vee}}
\mathbb{R} \cdot \alpha^{\vee}. \]
We regard the set $\Sigma^\vee$ of the dual of the simple roots
as a subset of $\mathfrak{h}$,
where the dual root $\alpha^\vee \in \mathfrak{h}$
of a root $\alpha \in \Delta$
is defined by
$\langle \alpha', \alpha^\vee \rangle =
2(\alpha',\alpha)/(\alpha,\alpha)$
(for any $\alpha' \in \Delta$)
using the natural pairing $\langle\,{,}\,\rangle$
of $\mathfrak{h}^*$ with $\mathfrak{h}$
and the invariant inner product $(,)$ on $\mathfrak{h}^*$.
Then
\[ \overline{{\rm NE}}(G/P)=\sum_{\alpha^{\vee}\in \Sigma \setminus \Sigma_P}
\mathbb{R}_{\geq 0}\cdot \alpha^{\vee} \] 
under the identification $H_2(G/P,\mathbb{R})=\mathfrak{h}_P.$ 
In fact, Chevalley \cite{MR0106966} has shown 
that the line bundle $L_{\lambda}$ over $G/B$
corresponding to a weight $\lambda \in \mathfrak{h}$ is ample if and only if
$\lambda$ satisfies $\langle \lambda , \alpha^{\vee} \rangle >0,$ for all 
$\alpha \in \Sigma^{\vee}$. 
Since every ample line bundle corresponds to the class of a K\"ahler form in
$H^2(G/P,\RR)$ and hence it gives a strong Lefschetz element in
$H^*(G/P,\RR)$ from the Hard Lefschetz Theorem, we have the following.
\begin{proposition}
If $\ell\in \mathfrak{h}_P^*=H^2(G/P, \RR)$ satisfies the condition
$\langle \ell, \alpha^{\vee} \rangle > 0$ 
for all $\alpha^{\vee}\in \Sigma^{\vee} \smallsetminus \Sigma_P^{\vee},$
then $\ell$ is a strong Lefschetz element in 
$H^*(G/P,\mathbb{R})$.
\end{proposition}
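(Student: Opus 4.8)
The plan is to combine Chevalley's ampleness criterion with the Hard Lefschetz Theorem applied to the partial flag variety $G/P$. First I would observe that, just as for $G/B$, a line bundle $L_\lambda$ on $G/P$ is ample precisely when $\lambda$ pairs positively with the generators of the effective curve cone $\overline{\mathrm{NE}}(G/P)$; by Kleiman's criterion together with the description $\overline{\mathrm{NE}}(G/P)=\sum_{\alpha^\vee \in \Sigma^\vee \smallsetminus \Sigma_P^\vee}\RR_{\geq 0}\cdot\alpha^\vee$ recalled above, this amounts exactly to the hypothesis $\langle \ell,\alpha^\vee\rangle>0$ for all $\alpha^\vee\in\Sigma^\vee\smallsetminus\Sigma_P^\vee$. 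So an element $\ell\in\mathfrak{h}_P^*$ satisfying the stated condition corresponds to an ample line bundle on $G/P$, hence to the class of a K\"ahler form $\omega$ on $G/P$ (since $G/P$ is a smooth projective variety, ampleness of a line bundle is equivalent to the existence of a K\"ahler metric with that curvature class, up to positive scaling which does not affect the Lefschetz property).

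Next I would invoke Proposition~\ref{prop:hard-lefschetz-theorem}: for the compact K\"ahler manifold $(G/P,\omega)$, multiplication by $[\omega]$ gives isomorphisms $H^i(G/P,\RR)\to H^{2\dim_\CC(G/P)-i}(G/P,\RR)$ for all $i$. Since the cohomology of $G/P$ is concentrated in even degrees (it has an affine paving by Schubert cells), writing $H^*(G/P,\RR)=\bigoplus_{d} H^{2d}(G/P,\RR)$ with top degree $m=\dim_\CC(G/P)$, the operator $\times\ell^{m-2i}\colon H^{2i}\to H^{2(m-i)}$ is an isomorphism for every $i=0,1,\ldots,\lfloor m/2\rfloor$. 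This is precisely the statement that $\ell$ is a strong Lefschetz element of $H^*(G/P,\RR)$ in the sense of Definition~\ref{defn:slp} — one only needs to note that the Hilbert function of $H^*(G/P,\RR)$ is symmetric, which follows from Poincar\'e duality.

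The only genuinely delicate point — and the step I would single out as the main obstacle, though it is standard — is the passage from "$L_\ell$ is ample" to "$[\omega]=c_1(L_\ell)$ is represented by a K\"ahler form so that Hard Lefschetz applies with the operator being multiplication by $\ell$ in $H^*(G/P,\RR)$." This requires the Kodaira-type fact that on a projective manifold an ample class contains a K\"ahler form, together with the identification of cup product with $c_1(L_\ell)$ with the multiplication by $\ell$ under the canonical isomorphism $\mathfrak{h}_P^*\cong H^2(G/P,\RR)$ used throughout this section. Once that identification is in place, the argument is a direct citation of the two propositions already recalled. I would therefore keep the proof short: state the correspondence $\ell\leftrightarrow$ ample $L_\ell$ via Chevalley and Kleiman, note $c_1(L_\ell)$ is a K\"ahler class, and conclude by Hard Lefschetz that $\ell$ is a strong Lefschetz element.
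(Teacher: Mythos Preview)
Your proposal is correct and follows essentially the same route as the paper: the paper's argument, given in the paragraph immediately preceding the proposition, is precisely that the positivity hypothesis places $\ell$ in the ample cone via Kleiman's criterion and the description of $\overline{\mathrm{NE}}(G/P)$, that an ample class is a K\"ahler class, and that Hard Lefschetz then makes $\ell$ a strong Lefschetz element. Your additional remarks about even-degree concentration and Poincar\'e duality are accurate elaborations but not new ingredients.
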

The Weyl chambers with respect to the root system $\Delta$ is 
by definition the connected components of 
$\{ x \in \mathfrak{h}^* \;;\; \langle x, \alpha^\vee \rangle \ne 0
\text{~for all~} \alpha \in \Delta \}$,
and the fundamental Weyl chamber is the connected component
$\{ x \in \mathfrak{h}^* \;;\; \langle x, \alpha^\vee \rangle > 0 
\text{~for all~} \alpha^\vee \in \Sigma^\vee
\}$,
which is equal to the K\"ahler cone 
under the identification $H^2(G/B,\RR)=\mathfrak{h}^*.$
Note that the image of a strong Lefschetz element of $R$
under an action of the Weyl group is also a strong Lefschetz element.
Since the Weyl group acts on the set of the Weyl chambers transitively,
we obtain the following sufficient condition for
 the strong Lefschetz element.
\begin{corollary}
\label{cor:sufficient-condition-for-weyl-group}
Let $R$ be the coinvariant ring of the Weyl group.
An element $\ell\in R_1$ is a strong Lefschetz element 
if $\ell$ is not fixed by any reflections of $W$.
\end{corollary}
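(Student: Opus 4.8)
The goal is to upgrade the sufficient condition of the preceding proposition—phrased for elements in the fundamental Weyl chamber / Kähler cone—to the full set of elements not fixed by any reflection, using the $W$-equivariance of the coinvariant ring.

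\medskip

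\noindent\textbf{Proof proposal.}
The plan is to reduce an arbitrary $\ell \in R_1$ that is not fixed by any reflection to the situation already covered by the previous proposition, namely $\ell$ lying in the fundamental Weyl chamber, by applying a suitable element of $W$. First I would recall that $R_1 = \mathfrak{h}^*$ is $W$-stable, and that the $W$-action on $R$ is by graded ring automorphisms; consequently, if $\ell$ is a strong Lefschetz element then so is $w\cdot\ell$ for every $w \in W$, because the multiplication maps $\times(w\cdot\ell)^{m-2i}$ are obtained from $\times\ell^{m-2i}$ by conjugating with the linear automorphisms $w\colon R_i \to R_i$ and $w\colon R_{m-i}\to R_{m-i}$, hence have the same rank. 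So the set of strong Lefschetz elements in $R_1$ is a union of $W$-orbits, and it suffices to show that \emph{some} element of the $W$-orbit of $\ell$ is a strong Lefschetz element.

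\medskip

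\noindent The key geometric input is that an element $\ell \in \mathfrak{h}^*$ is fixed by some reflection $s_\alpha$ exactly when $\ell$ lies on the wall $\{x : \langle x,\alpha^\vee\rangle = 0\}$; therefore $\ell$ is fixed by no reflection precisely when $\langle \ell,\alpha^\vee\rangle \ne 0$ for every root $\alpha$, i.e. precisely when $\ell$ lies in some Weyl chamber (in the open complement of all the walls). Since $W$ acts simply transitively on the set of Weyl chambers, there is $w \in W$ with $w\cdot\ell$ in the fundamental Weyl chamber, which the excerpt identifies with the Kähler cone: $\langle w\cdot\ell, \alpha^\vee\rangle > 0$ for all $\alpha^\vee \in \Sigma^\vee$. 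By the preceding proposition (applied with $P = B$, so $\Sigma_P = \emptyset$), $w\cdot\ell$ is a strong Lefschetz element of $H^*(G/B,\RR) \simeq R$. By the first paragraph, $\ell = w^{-1}\cdot(w\cdot\ell)$ is then a strong Lefschetz element of $R$ as well, which is the claim.

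\medskip

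\noindent I do not expect any serious obstacle here; the statement is essentially a bookkeeping consequence of (i) $W$-equivariance of the Lefschetz condition and (ii) transitivity of $W$ on Weyl chambers, both of which are already in place in the excerpt. The one point deserving a sentence of care is the equivalence ``$\ell$ fixed by no reflection $\iff$ $\ell$ avoids all root hyperplanes''—one should note that every reflection in $W$ is of the form $s_\alpha$ for some $\alpha \in \Delta$, so being fixed by $s_\alpha$ is the same as $\langle\ell,\alpha^\vee\rangle = 0$, and conversely any $\ell$ off all the walls lies in a chamber. With that remark the argument is complete.
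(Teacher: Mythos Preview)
Your proof is correct and follows exactly the same approach as the paper: the paper's argument consists of the two sentences preceding the corollary, namely that strong Lefschetz elements are preserved under the $W$-action and that $W$ acts transitively on the Weyl chambers, which is precisely what you spell out in detail. Your additional remarks (why $W$-equivariance preserves the Lefschetz condition via conjugation, and why ``not fixed by any reflection'' is equivalent to lying in a Weyl chamber) simply make explicit what the paper leaves implicit.
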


\section{The sufficient condition for strong Lefschetz elements}
\label{sec:sufficient-condition}

In this section 
we prove that the condition in Theorem~\ref{thm:necessary-condition}
is also the sufficient condition for the strong Lefschetz elements
of the coinvariant rings $R$ of Coxeter groups $W$
of types other than H$_4$.
Thus our first main theorem (Theorem~\ref{thm:condition-for-full-flag})
is proved by
Theorems~\ref{thm:necessary-condition} and
\ref{thm:sufficient-condition-Coxeter-group} below.
\begin{theorem}
\label{thm:sufficient-condition-Coxeter-group}
Let $W$ be a finite Coxeter group 
which does not contain irreducible components of type H$_4$.
Then a homogeneous element $\ell$ of $R_1$ of degree one
is a strong Lefschetz element of $R$
if $\ell$ is not fixed by any reflections of $W$.
\qed
\end{theorem}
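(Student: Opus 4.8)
The plan is to reduce the general statement to the irreducible case, and then to split the irreducible case into three parts: the crystallographic types (already handled by Corollary~\ref{cor:sufficient-condition-for-weyl-group}), the dihedral types $\mathrm{I}_2(m)$, and the type $\mathrm{H}_3$. For the reduction step, recall that a finite Coxeter group $W$ decomposes as a direct product $W = W^{(1)} \times \cdots \times W^{(k)}$ of irreducible Coxeter groups acting on $V = V^{(1)} \oplus \cdots \oplus V^{(k)}$, and correspondingly the coinvariant ring factors as a tensor product $R = R^{(1)} \otimes_{\RR} \cdots \otimes_{\RR} R^{(k)}$ of graded rings. An element $\ell = \ell^{(1)} + \cdots + \ell^{(k)} \in R_1$ is not fixed by any reflection of $W$ precisely when each $\ell^{(j)}$ is not fixed by any reflection of $W^{(j)}$. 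So I would first establish the general principle that a tensor product of graded Artinian rings with symmetric Hilbert functions, each having a strong Lefschetz element $\ell^{(j)}$, has $\sum_j \ell^{(j)}$ as a strong Lefschetz element; this is a standard fact (the multiplication map on the tensor product decomposes via the binomial theorem, and one invokes that the product of rings each with the SLP has the SLP — see the references already cited). Given this, Theorem~\ref{thm:sufficient-condition-Coxeter-group} for general $W$ follows once it is proved for each irreducible factor.

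Next, for irreducible $W$: if $W$ is crystallographic, Corollary~\ref{cor:sufficient-condition-for-weyl-group} gives exactly the desired conclusion, so nothing remains. The type $\mathrm{H}_4$ is excluded by hypothesis. The only outstanding irreducible cases are therefore $\mathrm{I}_2(m)$ and $\mathrm{H}_3$, which must be treated by purely algebraic means since there is no flag-variety model. For $\mathrm{I}_2(m)$ the coinvariant ring is small — it has Hilbert function $(1,2,2,\ldots,2,1)$ of socle degree $m$ — and can be written down explicitly (e.g.\ as $\RR[x,y]$ modulo the invariants of the dihedral action, with one generator of degree $2$ and one of degree $m$). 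Here I would parametrize a degree-one element $\ell$, exploit the $W$-action to move $\ell$ into a convenient position (as in the Weyl-group argument, the orbit of a strong Lefschetz element consists of strong Lefschetz elements, and the reflections act transitively on chambers), and then verify directly that the relevant multiplication maps $\times\ell^{m-2i}\colon R_i \to R_{m-i}$ are isomorphisms. Because each $R_i$ for $0<i<m$ is two-dimensional, each such map is given by a $2\times 2$ matrix whose determinant is an explicit polynomial in the coefficients of $\ell$; the condition ``$\ell$ not fixed by any reflection'' should be exactly the condition that these determinants are nonzero, which one checks by a finite computation. For $\mathrm{H}_3$ the socle degree is $m=15$ and the Hilbert function is $(1,2,3,3,3,\ldots,3,2,1)$; after using the $W$-symmetry to normalize $\ell$ up to the finitely many chambers, one is reduced to checking the nonvanishing of finitely many determinants of size at most $3$, again a finite (if tedious) verification, conceivably assisted by explicit presentations of the $\mathrm{H}_3$ coinvariant ring available in the literature.

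The main obstacle I anticipate is the $\mathrm{H}_3$ computation: unlike the Weyl groups, there is no geometric shortcut, and unlike $\mathrm{I}_2(m)$, the ring is large enough that a brute-force determinant check across all chambers is unpleasant to carry out by hand. The natural strategy to tame it is to use the invariant inner product and the $W$-equivariance to reduce to a one-parameter family of candidate elements $\ell$ (parametrizing a transversal to the $W$-orbits inside the open chamber), and to work with a well-chosen basis of $R$ — for instance one adapted to a Lefschetz decomposition — so that the maps $\times\ell^{m-2i}$ become as close to triangular as possible. A secondary subtlety in the reduction step is being careful that the Hilbert function of the tensor product is again symmetric and that the ``not fixed by any reflection'' condition genuinely decomposes factor-by-factor; both are routine but should be stated cleanly before the case analysis. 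Once these pieces are in place, combining the irreducible cases via the tensor-product principle completes the proof, and together with Theorem~\ref{thm:necessary-condition} this establishes Theorem~\ref{thm:condition-for-full-flag}.
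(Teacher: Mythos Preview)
Your overall architecture is exactly that of the paper: reduce to irreducible factors via the tensor-product decomposition of the coinvariant ring, invoke Corollary~\ref{cor:sufficient-condition-for-weyl-group} for the crystallographic types, and handle $\mathrm{I}_2(m)$ and $\mathrm{H}_3$ by direct computation after using the $W$-action to place $\ell$ in the fundamental chamber.

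There is, however, one factual slip that substantially affects the $\mathrm{H}_3$ case. The Hilbert function of the $\mathrm{H}_3$ coinvariant ring is not $(1,2,3,3,\ldots,3,2,1)$: the fundamental invariants have degrees $2,6,10$, so the Hilbert series is $(1+t)(1+t+\cdots+t^5)(1+t+\cdots+t^9)$, giving
\[
(1,3,5,7,9,11,12,12,12,12,11,9,7,5,3,1).
\]
Thus the multiplication maps $\times\ell^{15-2i}$ are represented by matrices of size up to $12\times 12$, not $3\times 3$, and the ``finite (if tedious) verification'' you envisage is considerably larger than advertised. This is precisely why the paper, after parametrizing $\ell = a_1\varpi_1 + a_2\varpi_2 + a_3\varpi_3$ with $a_1,a_2,a_3>0$ in the fundamental chamber, hands the eight determinants $f_0,\ldots,f_7$ to Macaulay~2 and verifies that each $f_i$ is a polynomial in $a_1,a_2,a_3$ all of whose monomials carry coefficients of a fixed sign. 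For $\mathrm{I}_2(m)$ the paper's argument is also close in spirit to yours, but it separates the check into two pieces: a degree-counting argument showing $\ell^m\neq 0$ (this is where the ``not fixed by any reflection'' hypothesis is actually consumed), and a direct discriminant computation showing that the $2\times 2$ maps $\times\ell\colon R_k\to R_{k+1}$ are bijective for \emph{every} nonzero $\ell$, not only the regular ones.
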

If a Coxeter group $W$ is a direct product of 
two Coxeter groups, 
then the coinvariant ring of $W$ is the tensor product 
of their coinvariant rings.
Thus we may assume that $W$ is an irreducible Coxeter group.
We already give the proof of 
Theorem~\ref{thm:sufficient-condition-Coxeter-group}
for Weyl groups
(Corollary~\ref{cor:sufficient-condition-for-weyl-group}),
and the remaining non-crystallographic Coxeter groups are of types
H$_3$, H$_4$ and I$_2(m)$.
We give the proof of 
Theorem~\ref{thm:sufficient-condition-Coxeter-group}
for the types H$_3$ and I$_2(m)$
in the following subsections.
Theorem~\ref{thm:sufficient-condition-Coxeter-group}
is still conjectural for H$_4$,
although the strong Lefschetz property of the coinvariant ring 
of type H$_4$ has been proved by \cite{MR2371985}.

\subsection{The coinvariant ring of I$_2(m)$}
\label{subsec:I2(m)}

In this subsection 
we prove Theorem~\ref{thm:sufficient-condition-Coxeter-group}
for the irreducible Coxeter group of type I$_2(m)$.
As a group acting on $\RR^2$,
the dihedral group I$_2(m)$ is generated by two simple reflections
$s_1 = s_{\beta(0)}$ and
$s_2 = s_{\beta((m-1)\pi/m)}$,
where $\beta(\psi) \in \RR^2$ is the unit vector of angle $\psi$,
and $s_{\beta(\psi)} \in GL_2(\RR)$ denotes the reflection
with respect to the vector $\beta(\psi)$.
These generators are subject to the relation $(s_1 s_2)^m = e$,
where $e$ denotes the identity element of I$_2(m)$.

Put $\theta = \pi/m$.
The group I$_2(m)$ has $m$ reflections
$s_{\beta(k\theta)}$
corresponding to the positive roots $\beta(k\theta)$
($0 \le k \le m-1$) 
and $2m$ elements as in the following lemma:

\begin{lemma}
Let $\alpha_1 = \beta(0)$ and $\alpha_2 = \beta((m-1)\theta)$
be the simple roots.
Define elements $a_k$ and $b_k$ by
alternating products of $s_1$ and $s_2$ as
\begin{align*}
  a_k &=   s_1 s_2 s_1 s_2 \cdots s_i
  \quad (\text{product of $k$ reflections beginning with $s_1$.
      $i=1$ or $2$}), 
  \\
  b_k &= s_2 s_1 s_2 s_1 \cdots s_j
  \quad (\text{product of $k$ reflections beginning with $s_2$.
      $j=1$ or $2$}),
\end{align*}
for $k \ge 0$.
Note that $a_k$ and $b_k$ are of length $k$, only when $0\le k \le m$.

Then the diagram of the Bruhat order on I$_2(m)$ is as follows:
\begin{gather}
  \label{eq:bruhat-order-of-I2(m)}
  \begin{array}{cccccccccccccccccc}
    && a_1 &
    \xrightarrow{\beta(\theta)}
    & a_2 & \cdots & a_k &
    \xrightarrow{\beta(k\theta)}
    & a_{k+1} & \cdots & a_{m-1}
    \\
    e &
    \makebox[0pt][c]{\raisebox{2.1ex}{{\scriptsize$\alpha_1$}}}
    \makebox[0pt][l]{\raisebox{1.1ex}{$\nearrow$}}
   \makebox[0pt][c]{\raisebox{-1.6ex}{{\scriptsize$\alpha_2$}}}
    \makebox[0pt][l]{\raisebox{-1.1ex}{$\searrow$}}
    &&
    \makebox[0pt][l]{\raisebox{2.1ex}{\scriptsize$\alpha_2$}$\!\!\searrow$}
    \makebox[18pt][l]{\raisebox{-1.6ex}{\scriptsize$\alpha_1$}$\!\!\nearrow$}
    &&&&
    \makebox[0pt][l]{\raisebox{2.1ex}{\scriptsize$\alpha_2$}$\!\!\searrow$}
    \makebox[18pt][l]{\raisebox{-1.6ex}{\scriptsize$\alpha_1$}$\!\!\nearrow$}
    &&&&
    \makebox[0pt][l]{\raisebox{2.1ex}{{\ \ \scriptsize$\alpha_2$}}}
    \makebox[0pt][l]{\raisebox{1.1ex}{$\searrow$}}
    \makebox[0pt][l]{\raisebox{-1.6ex}{{\ \ \scriptsize$\alpha_1$}}}
    \makebox[8pt][l]{\raisebox{-1.1ex}{$\nearrow$}}
    &
    a_m = b_m,
    \\
    && b_1 &
    \xrightarrow[\beta((m-2)\theta)]{}
    & b_2 & \cdots & b_k &
    \xrightarrow[\beta((m-k-1)\theta)]{}
    & b_{k+1} & \cdots & b_{m-1}
  \end{array}
\end{gather}
where $w \overset{\beta}{\to} w'$ means that
$s_\beta w = w'$,
$l(w) + 1 = l(w')$ and
$\beta$ is a positive root.

In particular,
the Hilbert function $H(d) = \dim_{\RR} R_d$ of the coinvariant ring 
$R = \bigoplus_{d=0}^m R_d$ is given by
$(H(d))_{d=0}^{m} = (1,2,2,\ldots, 2,1)$.
\end{lemma}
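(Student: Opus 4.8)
The plan is to deduce the whole statement from standard structure theory of the dihedral group $W=\mathrm{I}_2(m)$ together with one short computation inside $W$ using only the relations $s_1^2=s_2^2=(s_1s_2)^m=e$. Write $r=s_1s_2$; since $r$ acts on $\RR^2$ as the rotation by $2\pi/m$, it has order exactly $m$, the $m$ rotations $r^i$ and the $m$ reflections $s_1r^i$ ($0\le i\le m-1$) exhaust $W$, every element of length $<m$ has a unique reduced word, and $w_0$ has exactly the two reduced words $s_1s_2s_1\cdots$ and $s_2s_1s_2\cdots$ of length $m$. Consequently the alternating words $a_k$ and $b_k$ of length $\le m$ are reduced, so $l(a_k)=l(b_k)=k$ for $0\le k\le m$, the $2m$ elements $e$, $a_1,\ldots,a_{m-1}$, $b_1,\ldots,b_{m-1}$, $w_0=a_m=b_m$ are pairwise distinct, and they are all of $W$. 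The statement about the Hilbert function then follows at once from $\dim_{\RR}R_d=\#\{w\in W\;;\;l(w)=d\}$ recalled in Section~\ref{sec:coinvariant-ring}: the values are $1,2,2,\ldots,2,1$.

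To get the shape of the Hasse diagram, I would invoke the Subword Property of the Bruhat order. Deleting the first letter of the reduced word for $a_{k+1}$ (resp.\ $b_{k+1}$) yields the reduced word for $b_k$ (resp.\ $a_k$), and deleting the last letter yields the reduced word for $a_k$ (resp.\ $b_k$); hence every element of length $k$ lies below every element of length $k+1$ in Bruhat order. Since distinct comparable elements have strictly increasing length, $u<v\iff l(u)<l(v)$, so a cover $u\lessdot v$ is exactly a pair with $l(v)=l(u)+1$. This reproduces precisely the incidences drawn in~(\ref{eq:bruhat-order-of-I2(m)}): $e$ is covered by $a_1$ and $b_1$; for $1\le k\le m-2$ each of $a_k,b_k$ is covered by both $a_{k+1}$ and $b_{k+1}$; and $a_{m-1},b_{m-1}$ are both covered by $w_0$.

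It remains to identify, for each cover $w\xrightarrow{\beta}w'$, the positive root $\beta$ with $s_\beta w=w'$, i.e.\ to recognise $w'w^{-1}$ as the reflection $s_{\beta(j\theta)}$ for the correct $j$. The ``crossing'' edges are immediate from the definitions, since $s_2a_k=b_{k+1}$ and $s_1b_k=a_{k+1}$; so those edges carry $\alpha_2$ and $\alpha_1$ respectively. For the ``straight'' edges I would use $s_{\beta(j\theta)}=r^js_1$ (the reflection fixing the line at angle $j\theta$), together with $s_1=s_{\beta(0)}$, $s_2=s_{\beta((m-1)\theta)}=r^{-1}s_1$, and the explicit forms $a_{2p}=r^p$, $a_{2p+1}=r^ps_1$, $b_{2p}=r^{-p}$, $b_{2p+1}=r^{-p-1}s_1$. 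Then the desired identities $s_{\beta(k\theta)}a_k=a_{k+1}$ and $s_{\beta((m-k-1)\theta)}b_k=b_{k+1}$ collapse, via $s_1r^j=r^{-j}s_1$ and $r^m=e$, to a one-line check in each parity of $k$, and assembling these labels gives exactly diagram~(\ref{eq:bruhat-order-of-I2(m)}).

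The one place requiring care is precisely this last bookkeeping with the angle parameters, and in particular the degenerate case $k=m-1$: there $a_{m-1}$ and $b_{m-1}$ each have a single cover, into $w_0=a_m=b_m$, and the pattern assigns to it both the label $\beta((m-1)\theta)$ (from the straight edge) and the label $\alpha_2$ (from the crossing edge) on the $a$-side, and both $\beta(0)$ and $\alpha_1$ on the $b$-side; these are consistent only because $s_2=s_{\beta((m-1)\theta)}$ and $s_1=s_{\beta(0)}$ by the choice of simple reflections, so no ambiguity arises and the diagram is well defined.
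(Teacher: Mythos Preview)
Your proof is correct and follows essentially the same route as the paper's own argument: enumerate the elements of $\mathrm{I}_2(m)$ by length, and then verify the four relations $s_2 a_k=b_{k+1}$, $s_1 b_k=a_{k+1}$, $s_{\beta(k\theta)}a_k=a_{k+1}$, $s_{\beta((m-k-1)\theta)}b_k=b_{k+1}$ that label the covers. The paper simply asserts these relations as ``easy to show''; you supply the explicit bookkeeping via $r=s_1s_2$, the identities $s_{\beta(j\theta)}=r^js_1$ and $s_1r^j=r^{-j}s_1$, and the Subword Property for the Bruhat shape, which is a welcome fleshing-out but not a different strategy. (One cosmetic slip: your parenthetical ``the reflection fixing the line at angle $j\theta$'' should read ``perpendicular to $\beta(j\theta)$''; the formula $s_{\beta(j\theta)}=r^js_1$ you actually use is correct.)
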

\begin{proof}
It is clear that 
there are two elements of length $k$ in I$_2(m)$,
and they are $a_k$ and $b_k$ ($1 \le k \le m-1$).
Between the elements of length $k$ and those of length $k+1$,
it is easy to show the relations
$s_2 a_k = b_{k+1}$,
$s_1 b_k = a_{k+1}$,
$s_{\beta(k\theta)} a_k = a_{k+1}$
and $s_{\beta((m-k-1)\theta)} b_k = b_{k+1}$.
Therefore we have the desired diagram.
\end{proof}

Let $\ell \in R$ be an element 
not fixed by any reflections of I$_2(m)$.
We divide the proof of 
Theorem~\ref{thm:sufficient-condition-Coxeter-group}
for I$_2(m)$ into two steps.

\paragraph{Step 1: $\times \ell^{m}: R_0 \to R_m$ is bijective:}
Note first that there exists an element $g \in R_1$ with $g^m \ne 0$.
Indeed, we have
\begin{align*}
(-1)^m m! u_1 u_2 \cdots u_m = 
\sum_{I \subset \{1,2,\ldots,m\}}
(-1)^{\#I} \left( \sum_{i\in I} u_i \right)^m,
\end{align*}
in the polynomial ring $\ZZ[u_1,u_2,\ldots,u_m]$
(see \cite[Lemma~2.4.1]{MR1328644}, e.g.).
We substitute the $m$ positive roots to $u_1, u_2, \ldots, u_m$,
then the left-hand side is nonzero in $R_m$
(\cite[Corollary~3.3]{MR630960}).
Hence there exists $I \subset \{1,2,\ldots,m\}$
such that $\left( \sum_{i\in I} u_i \right)^m \ne 0$,
and $g =  \sum_{i\in I} u_i \in R_1$ is the desired element.

Let $\ell = a X_1 + b X_2$ be an element
of $R_1$ not fixed by any reflections ($a,b \in \RR$).
The (unique) matrix coefficient of 
the map $\times \ell^m$ is a
polynomial $f$ in $a$ and $b$ of total degree $m$,
and not identically zero as noted above.
The polynomial $f$ is divisible by 
every linear form defining a reflecting hyperplane,
since it becomes zero when $\ell$ is fixed by a reflection
(see Theorem~\ref{thm:necessary-condition}).
The number of the reflections is equal to $m$,
and therefore the polynomial $f$ should be the product of
$m$ linear forms defining the reflecting hyperplanes.
Hence $\times \ell^m: R_0 \to R_m$ is bijective,
since $\ell$ is not fixed by any reflections,
that is, not on any reflecting hyperplanes.

\paragraph{Step 2: $\times \ell: R_k \to R_{k+1}$ is bijective
($1 \le k \le m-2$):}
If we prove this step,
then 
$\times \ell^{m-2i}: R_i \to R_{m-i}$ 
is bijective for every $i = 1,\ldots, \lfloor m/2 \rfloor$.
Hence, together with Step 1,
the element $\ell$ is proved to be a strong Lefschetz element.
In the rest of this subsection we prove this step.

By \cite{MR630960}, 
the coinvariant ring $R$ of a Coxeter group $W$
has a linear basis $\{ X_w \;|\; w \in W \}$,
where $X_w$ is a homogeneous element of degree $l(w)$,
and they enjoy the following {\em Pieri formula}
\cite[Corollary~4.6]{MR630960}:
\begin{align}
  \label{eq:pieri-formula}
  X_{s_\alpha} X_w &=
  \sum_{\substack{\text{$\beta$: positive root},\\ l(w s_\beta) = l(w)+1}}
  \frac{2(\beta, \varpi_\alpha)}{(\beta, \beta)}
  X_{w s_\beta}
  \qquad
  \text{($\alpha$: simple root)},
\end{align}
where $\varpi_\alpha \in R_1$ denotes the fundamental weight
corresponding to a simple root $\alpha$
(i.e. $2(\alpha', \varpi_{\alpha})/(\alpha',\alpha') 
  = \delta_{\alpha,\alpha'}$
for every simple root $\alpha'$).
In the case of $W = \text{I}_2(m)$,
the simple roots are $\beta(0)$ and $\beta((m-1)\theta)$,
and it is easy to see that 
the corresponding fundamental weights are
$\beta( \pi/2 - \theta ) / 2\sin\theta$ and
$\beta(0) / 2\sin\theta$, respectively.
Thus using the Pieri formula~(\ref{eq:pieri-formula}),
we can compute as 
\begin{align*}
  X_{s_1} X_{a_k^{-1}} &=
  \sum_{\beta = \beta((m-1)\theta), \beta(k\theta)}
  \frac{2(\beta, \varpi_1)}{(\beta, \beta)}
  X_{a_k^{-1} s_\beta}
  \\ &=
  2 \left(
    \beta((m-1)\theta), \frac{\beta(\pi/2-\theta)}{2\sin\theta}
  \right)
  X_{b_{k+1}^{-1}} +
  2 \left(
    \beta(k\theta),  \frac{\beta(\pi/2-\theta)}{2\sin\theta}
  \right)
  X_{a_{k+1}^{-1}}
  \\ &=
  \frac{\sin(k+1)\theta}{\sin\theta} X_{a_{k+1}^{-1}}.
\end{align*}
One can similarly compute the other actions of $\times X_{s_i}$
on $X_{a_k^{-1}}$ and on $X_{b_k^{-1}}$:
\begin{align*}
  X_{s_1} X_{b_k^{-1}} &=
  X_{a_{k+1}^{-1}} + \frac{\sin k\theta}{\sin\theta} X_{b_{k+1}^{-1}},
  \\
  X_{s_2} X_{a_k^{-1}} &=
  X_{b_{k+1}^{-1}} + \frac{\sin k\theta}{\sin\theta} X_{a_{k+1}^{-1}},
  &
  X_{s_2} X_{b_k^{-1}} &=
\frac{\sin(k+1)\theta}{\sin\theta} X_{b_{k+1}^{-1}}.
\end{align*}
In other words, the matrix representations of
the multiplication maps $\times X_{s_i}: R_k \to R_{k+1}$
with respect to the bases $\{ X_{a_k^{-1}}, X_{b_k^{-1}} \}$
and  $\{ X_{a_{k+1}^{-1}}, X_{b_{k+1}^{-1}} \}$ are
\begin{align}
  \label{eq:matrix-of-X-for-I2(m)}
  X_{s_1} &= \begin{pmatrix} p_k & 1 \\ 0 & p_{k+1} \end{pmatrix}, &
  X_{s_2} &= \begin{pmatrix} p_{k+1} & 0 \\ 1 & p_k \end{pmatrix},
  \qquad \text{where~} p_k = \frac{\sin k\theta}{\sin\theta}.
\end{align}
If we write $\ell = a X_{s_1} + b X_{s_2}$ ($a, b \in \RR$),
then the matrix representation of 
$\times \ell: R_k \to R_{k+1}$ is
\begin{align*}
  \begin{pmatrix} a p_k + b p_{k+1}  & a \\
    b & a p_{k+1} + b p_k
  \end{pmatrix},
\end{align*}
by Equation (\ref{eq:matrix-of-X-for-I2(m)}),
and its determinant is equal to
$(a p_k + b p_{k+1})(a p_{k+1} + b p_k) - ab
= (a^2+b^2)p_k p_{k+1} + ab(p_k^2 + p_{k+1}^2 - 1)$.
Here $a$ or $b$ is nonzero, 
since $\ell$ is not fixed by any reflections,
and we may assume that $b \ne 0$ without loss of generality.
Set $t = a/b$, and the determinant then equals
$b^2(p_k p_{k+1} t^2 + (p_k^2 + p_{k+1}^2 - 1)t + p_k p_{k+1})$.
To show that this expression has no zeros
it suffices to show that the discriminant of this quadratic
polynomial in $t$ is negative.
The discriminant is
$(p_k^2 + p_{k+1}^2 - 1)^2 - 4(p_k p_{k+1})^2$,
and it is equal to
\begin{align*}
  & (\sin k\theta + \sin(k+1)\theta + \sin\theta)
  (\sin k\theta + \sin(k+1)\theta - \sin\theta)
  \\ & \qquad\times
  (\sin k\theta - \sin(k+1)\theta + \sin\theta)
  (\sin k\theta - \sin(k+1)\theta - \sin\theta) / \sin^4 \theta.
\end{align*}
It is easy to see that the fourth factor is negative,
and the other factors are positive.
We thus have proved that $\times \ell: R_k \to R_{k+1}$
is bijective for $k=1,2,\ldots,m-2$,
and hence Step 2 is completed.

\subsection{The coinvariant ring of H$_3$}
\label{subsec:H3}

In this subsection
we prove Theorem~\ref{thm:sufficient-condition-Coxeter-group}
for the type H$_3$
using the computer algebra system Macaulay 2.
The Coxeter group of type H$_3$ acts on $\RR^3$
and hence on $\RR[x_1, x_2, x_3]$.
By \cite[2.6]{MR926338},
the fundamental invariant polynomials of H$_3$ are
$I_{2\cdot 1}$, $I_{2\cdot 3}$ and $I_{2\cdot 5} \in \RR[x_1,x_2,x_3]$
given by
\[
  I_{2\cdot k}=
  \sum_{(i,j)=(1,2),(2,3),(3,1)}
  (\tau x_i + x_j)^{2k}+(\tau x_i - x_j)^{2k}, 
  \quad
  \tau=\frac{\sqrt{5}+1}{2}.
\]
Thus the coinvariant ring $R$ is realized as
$\RR[x_1,x_2,x_3]/(I_{2\cdot 1},I_{2\cdot 3},I_{2\cdot 5})$.
The socle degree of $R$ is equal to $15$.

We can take the simple roots 
$\alpha_1 = x_1 - \tau x_2 - \tau^2 x_3$,
$\alpha_2 = x_2$
and $\alpha_3 = x_3$
\cite[2.6]{MR926338},
and the corresponding fundamental weights are
$\varpi_1 = x_1$,
$\varpi_2 = \tau x_1 + x_2$,
and $\varpi_3 = \tau^2 x_1 + x_3$
up to positive multiples.
We take $\ell \in R_1$ as 
\begin{align*}
  \ell = a_1 \varpi_1 + a_2 \varpi_2 + a_3 \varpi_3 =
 (a_1 + \tau a_2 + \tau^2a_3)x_1 + a_2x_2 + a_3x_3,
\end{align*}
where $a_1$, $a_2$ and $a_3$ are any positive numbers.
Then $\ell$ is in the fundamental Weyl chamber.
By the transitivity of the group action on the Weyl chambers,
it suffices to show that $\ell$ is a strong Lefschetz element
to prove Theorem~\ref{thm:sufficient-condition-Coxeter-group}.
We compute the determinants of the linear maps
$\times \ell^{15-2i}:R_i \to R_{15-i}$ ($i=0,1,\ldots,7$)
using the following program of Macaulay 2.

\begin{source}\label{getDet}~\\
{\footnotesize
\verbatiminput{h3.m2}
}
\end{source}

From the output of Source \ref{getDet}, 
we get the determinants $f_i$ of
$\times \lambda= \ell^{15-2i}:R_{i} \to R_{15-i}$.
The determinants $f_i$ are polynomials in $a_1, a_2, a_3$,
in which monomials are of the form 
$(\alpha\tau + \beta) a_1^s a_2^t a_3^u$ 
($\alpha, \beta \in \QQ$).
As listed in Table~\ref{tableofcoef},
the coefficients $\alpha\tau + \beta$ are
all positive for $i=0,2,4,6,7$,
and all negative for $i=1,3,5$.
Therefore the determinants $f_i$ are nonzero for every $i$,
since $a_1,a_2,a_3>0$.
Thus we have proved that $\ell$ is a Lefschetz element,
and hence
Theorem~\ref{thm:sufficient-condition-Coxeter-group}
for H$_3$.

\begin{table}
\caption{The number of monomials in $f_i$ with coefficient
$\alpha \tau+\beta$.}
\medskip
\label{tableofcoef}
\hfil
\begin{tabular}{|c||c|c|c|c|c|c||c||r|}
\hline
$f_i$&
$\substack{\alpha>0,\\ \beta>0}$&
$\substack{\alpha=0,\\ \beta>0}$&
$\substack{\alpha>0,\\ \beta=0}$&
$\substack{\alpha<0,\\ \beta<0}$&
$\substack{\alpha=0,\\ \beta<0}$&
$\substack{\alpha<0,\\ \beta=0}$&
total & \\\hline\hline
$f_0$ & 82   & 1 & 1 & 0    & 0 & 0 & 84   & 1.3      sec\\\hline
$f_1$ & 0    & 0 & 0 & 650  & 1 & 1 & 652  & 25.5     sec\\\hline
$f_2$ & 1362 & 1 & 1 & 0    & 0 & 0 & 1364 & 174.0    sec\\\hline
$f_3$ & 0    & 0 & 0 & 1843 & 1 & 1 & 1845 & 541.9    sec\\\hline
$f_4$ & 1877 & 1 & 1 & 0    & 0 & 0 & 1879 & 6919.2   sec\\\hline
$f_5$ & 0    & 0 & 0 & 1422 & 1 & 1 & 1424 & 84820.4  sec\\\hline
$f_6$ & 680  & 1 & 1 & 0    & 0 & 0 & 682  & 9144.2   sec\\\hline
$f_7$ & 86   & 1 & 1 & 0    & 0 & 0 & 88   & 0.3      sec\\\hline
\end{tabular}
\end{table}

\section{Parabolic invariants}
\label{sec:parab-invariant}

In this section, we study the parabolic invariants of the coinvariant
rings of finite Coxeter groups,
and we give the necessary and sufficient condition 
(Theorems~\ref{thm:necessary-condition-for-parabolic-invariant}
and \ref{thm:sufficient-condition-for-parabolic-invariant})
for the strong Lefschetz elements of the parabolic invariants
of Weyl groups.
Thus our second main theorem
(Theorem~\ref{thm:condition-for-partial-flag})
is proved by 
Theorems~\ref{thm:necessary-condition-for-parabolic-invariant}
and \ref{thm:sufficient-condition-for-parabolic-invariant}.

Let $W$ be a finite Coxeter group,
$S_0 \subset W$ the set of the simple reflections,
and $\Delta$ the corresponding root system.
For a subset $S \subset S_0$,
the subgroup $W_S$ of $W$ generated by $S$
is called a {\em parabolic subgroup} of $W$.
The {\em parabolic invariant} $R^{W_S}$  of $R$ is 
the graded subalgebra of the coinvariant ring $R$ of $W$,
defined by
\begin{align}
  \label{eq:def-of-parabolic-invariant}
  R^{W_S} &= \{ f \in R \;;\; \text{ $f$ is $W_S$-invariant} \}.
\end{align}
When $W$ is a Weyl group,
it is known that $R^{W_S}$ is isomorphic to the cohomology ring 
$H^\ast(G/P, \RR)$ of the partial flag variety $G/P$
with $R^{W_S}_d \simeq H^{2d}(G/P, \RR)$,
where $G$ is the connected and simply-connected complex Lie group
corresponding to the root system $\Delta$,
and $P$ is its parabolic subgroup whose Weyl group is equal to $W_S$
(see \cite[Theorem 5.5]{MR0429933} e.g.).
The following proposition describes the basic structure 
of the parabolic invariant $R^{W_S}$
\cite[Corollary 5.4]{MR630960}.

\begin{proposition}
\label{prop:basic-structure-of-parabolic-invariant}
Let $W^S$ be the set of the coset representatives of minimum length
of the coset space $W/W_S$.
We have the decomposition of the parabolic invariant $R^{W_S}$ 
into homogeneous components:
\begin{align*}
  R^{W_S} &= \bigoplus_{d=0}^{m-m_S} R^{W_S}_d,
  &
  \dim_{\RR} R^{W_S}_d = \#\{ w \in W^S \;;\; l(w) = d \},    
\end{align*}
where $m$ and $m_S$ are the lengths of the longest elements of 
$W$ and $W_S$, respectively.
\qed
\end{proposition}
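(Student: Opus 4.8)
The plan is to determine the graded vector-space structure of $R^{W_S}$ by computing its Hilbert series in closed form and recognizing it as the length generating function of $W^S$. Since we work over $\RR$ and $W_S$ is finite, the Reynolds operator $\pi_S\colon f\mapsto |W_S|^{-1}\sum_{v\in W_S} vf$ is a graded projection of $\RR[V]$ onto $\RR[V]^{W_S}$ that is linear over $\RR[V]^W$. Applying $W_S$-invariants to the exact sequence $0\to J\to\RR[V]\to R\to 0$ of graded $W_S$-modules (taking invariants is exact in characteristic zero, and $J$ is $W$-stable) gives $R^{W_S}=\RR[V]^{W_S}/(J\cap\RR[V]^{W_S})$. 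Writing $J=\RR[V]^W_+\cdot\RR[V]$ and applying $\pi_S$ to an element of $J\cap\RR[V]^{W_S}$ — the $W$-invariant coefficients pass through $\pi_S$ — one obtains $J\cap\RR[V]^{W_S}=\RR[V]^W_+\cdot\RR[V]^{W_S}$, so that
\[
R^{W_S} \;=\; \RR[V]^{W_S}\big/\big(\RR[V]^W_+\cdot\RR[V]^{W_S}\big) \;=\; \RR[V]^{W_S}\otimes_{\RR[V]^W}\RR .
\]

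Next I would pass to Hilbert series. Since $\RR[V]$ is a free module over the polynomial ring $\RR[V]^W$ (Chevalley, see \cite{MR0429933}) and $\RR[V]^{W_S}$ is a graded $\RR[V]^W$-module direct summand of $\RR[V]$ via $\pi_S$, the module $\RR[V]^{W_S}$ is graded projective, hence graded free, over $\RR[V]^W$; therefore $\mathrm{Hilb}(R^{W_S})=\mathrm{Hilb}(\RR[V]^{W_S})/\mathrm{Hilb}(\RR[V]^W)$. Combining this with $\mathrm{Hilb}(R)=\mathrm{Hilb}(\RR[V])/\mathrm{Hilb}(\RR[V]^W)$ and, for the ring $R_S:=\RR[V]/(\RR[V]^{W_S}_+\cdot\RR[V])$, the identity $\mathrm{Hilb}(R_S)=\mathrm{Hilb}(\RR[V])/\mathrm{Hilb}(\RR[V]^{W_S})$ (valid because $W_S$, being generated by simple reflections, is itself a real reflection group on $V$, so the same freeness applies, and $\mathrm{Hilb}(R_S)=\sum_{v\in W_S}t^{l(v)}$ by Section~\ref{sec:coinvariant-ring}), I get
\[
\mathrm{Hilb}(R^{W_S}) \;=\; \frac{\mathrm{Hilb}(R)}{\mathrm{Hilb}(R_S)} \;=\; \frac{\sum_{w\in W} t^{l(w)}}{\sum_{v\in W_S} t^{l(v)}} .
\]

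Finally I invoke the standard Coxeter-group fact that multiplication gives a length-additive bijection $W^S\times W_S\to W$: every $w\in W$ is uniquely $w=uv$ with $u\in W^S$, $v\in W_S$ and $l(w)=l(u)+l(v)$ (see \cite{MR1066460}). This factors the numerator as $\sum_{w\in W}t^{l(w)}=\big(\sum_{u\in W^S}t^{l(u)}\big)\big(\sum_{v\in W_S}t^{l(v)}\big)$, whence $\mathrm{Hilb}(R^{W_S})=\sum_{u\in W^S}t^{l(u)}$; reading off coefficients gives $\dim_\RR R^{W_S}_d=\#\{u\in W^S : l(u)=d\}$. Applying the same factorization to the longest element $w_0$ of $W$ shows $\max_{u\in W^S}l(u)=l(w_0)-m_S=m-m_S$, which pins down the range of degrees.

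The only point needing genuine care is the interaction between the two reflection-group quotients: the identity $J\cap\RR[V]^{W_S}=\RR[V]^W_+\cdot\RR[V]^{W_S}$ and the freeness of $\RR[V]^{W_S}$ over $\RR[V]^W$ (equivalently, the Cohen--Macaulay structure coming from the reflection-group hypothesis), which is what legitimizes dividing Hilbert series. Everything else reduces to characteristic-zero averaging and standard Coxeter combinatorics. For Weyl groups one could instead read the statement off the isomorphism $R^{W_S}\cong H^\ast(G/P,\RR)$ and the Schubert-cell decomposition of $G/P$ indexed by $W^S$, but the algebraic argument above has the merit of applying to non-crystallographic $W$ as well.
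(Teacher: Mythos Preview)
Your argument is correct. The Reynolds-operator identification $R^{W_S}\cong \RR[V]^{W_S}\otimes_{\RR[V]^W}\RR$, the freeness of $\RR[V]^{W_S}$ over $\RR[V]^W$ (as a graded projective summand of a graded free module over a graded-local ring), and the length-additive factorization $W^S\times W_S\to W$ together give exactly the Hilbert series you state; the top degree $m-m_S$ falls out from factoring $w_0$. One small point worth making explicit is that $W_S$ fixes a complement to the span of $\Delta_S$ in $V$, so the extra linear invariants are already in $\RR[V]^{W_S}_+$ and your $R_S$ really does coincide with the coinvariant ring of $W_S$ on its reflection representation, with Hilbert series $\sum_{v\in W_S}t^{l(v)}$.

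As for the comparison: the paper does not give a proof at all. The proposition is stated with a terminal \qed and is attributed immediately beforehand to \cite[Corollary~5.4]{MR630960}; it is quoted as background. Your write-up is therefore not an alternative to the paper's argument but a self-contained replacement for a citation, and it has the virtue (which you note) of covering the non-crystallographic cases uniformly rather than passing through $H^*(G/P,\RR)$ and Schubert cells.
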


The following theorem gives the necessary condition 
for strong Lefschetz elements of the parabolic invariant $R^{W_S}$,
and the theorem is the analogue of 
Theorem \ref{thm:necessary-condition}
which deals with the coinvariant rings $R$.

\begin{theorem}
\label{thm:necessary-condition-for-parabolic-invariant}
Let $R^{W_S} = \bigoplus_{d=0}^{m-m_S} R^{W_S}_d$ 
be the parabolic invariant of the coinvariant ring $R$ of 
a (possibly non-crystallographic) finite Coxeter group $W$,
defined in Equation $(\ref{eq:def-of-parabolic-invariant})$.
If $\ell \in R^{W_S}_1$ is a strong Lefschetz element of $R^{W_S}$,
then $\ell$ is not fixed by any reflections of $W \smallsetminus W_S$.
\end{theorem}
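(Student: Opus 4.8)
The plan is to mimic the argument of Theorem~\ref{thm:necessary-condition} as closely as possible, replacing the role of the top degree $R_m$ of the full coinvariant ring by the top degree $R^{W_S}_{m-m_S}$ of the parabolic invariant. First I would recall from Proposition~\ref{prop:basic-structure-of-parabolic-invariant} that $R^{W_S}$ is a graded ring with socle degree $m-m_S$, and that its top piece $R^{W_S}_{m-m_S}$ is one-dimensional: indeed $\dim_\RR R^{W_S}_{m-m_S} = \#\{w \in W^S : l(w) = m-m_S\}$, and the unique such $w$ is the minimal-length coset representative of the coset $w_0 W_S$, namely $w_0 w_{0,S}$ where $w_0$ and $w_{0,S}$ are the longest elements of $W$ and $W_S$. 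So $R^{W_S}_{m-m_S}$ is spanned by a single element, call it $\Theta$.

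The key point is to identify how the reflections of $W$ act on $\Theta$. Since $R^{W_S}$ sits inside $R$ and $\Theta$ lies in $R_{m-m_S} \subset R$, I would argue that $\Theta$ is, up to scalar, the image in $R$ of the product of the positive roots of $\Delta$ that are sent to negative roots by $w_0 w_{0,S}$, equivalently the positive roots not in the parabolic root subsystem $\Delta_S$ — this is the ``relative anti-invariant'' or the Schubert class of the big cell in $G/P$. The crucial property is: a reflection $s_\alpha$ with $\alpha \in \Delta \smallsetminus \Delta_S$ acts on $\Theta$ by a sign $-1$ (since $s_\alpha$ permutes $\Delta^+ \smallsetminus \Delta_S^+$ up to exactly one sign change, the one at $\alpha$ itself, when $\alpha$ is simple; for general $\alpha \in \Delta^+\smallsetminus\Delta_S^+$ one gets an odd number of sign changes), while reflections in $W_S$ fix $\Theta$ (this is automatic: $\Theta \in R^{W_S}$). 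Establishing this sign behaviour — in particular that every reflection in $W\smallsetminus W_S$, not just the simple ones outside $S$, acts by $-1$ on $\Theta$ — is what I expect to be the main obstacle, and I would handle it by the standard root-system bookkeeping: $s_\alpha$ for $\alpha \in \Delta^+ \smallsetminus \Delta_S^+$ changes the sign of an odd number of roots among $\Delta^+\smallsetminus\Delta_S^+$, since it changes the sign of an odd number in all of $\Delta^+$ (namely $l(s_\alpha)$, which is odd) and an even number within $\Delta_S^+$ (it maps $\Delta_S^+$ to itself, being in $W_S$... wait, no — here one must be careful, as $s_\alpha \notin W_S$; instead use that $s_\alpha$ maps $\Delta_S$ to $\Delta_S$ is false, so the cleaner route is to note that $\Theta$ equals the image of the lowest-degree $W_S$-invariant lift of the full anti-invariant divided by the $\Delta_S$-anti-invariant, and track signs there).

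With this in hand the conclusion is immediate by the contradiction argument of Theorem~\ref{thm:necessary-condition}: suppose $\ell \in R^{W_S}_1$ is a strong Lefschetz element and suppose $s = s_\alpha$ with $\alpha \in \Delta\smallsetminus\Delta_S$ fixes $\ell$. Then $\ell^{m-m_S} \in R^{W_S}_{m-m_S}$ is fixed by $s$, so $\ell^{m-m_S}$ is a scalar multiple of $\Theta$ fixed by $s$; but $s$ acts on $\Theta$ by $-1$, forcing $\ell^{m-m_S} = 0$. On the other hand the strong Lefschetz property gives that $\times \ell^{m-m_S} \colon R^{W_S}_0 \to R^{W_S}_{m-m_S}$ is bijective, so $\ell^{m-m_S} \neq 0$ — a contradiction. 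Hence no reflection in $W\smallsetminus W_S$ fixes $\ell$, which is the assertion.
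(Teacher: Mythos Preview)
Your overall contradiction strategy is sound: if $\ell$ were fixed by a reflection $s\in W\smallsetminus W_S$, then so would $\ell^{m-m_S}$, and one wants to rule out a nonzero $s$-fixed vector in the one-dimensional top piece $R^{W_S}_{m-m_S}$. The gap is in the mechanism you propose for this last step. The assertion that every reflection $s_\alpha$ with $\alpha\in\Delta\smallsetminus\Delta_S$ acts on the generator $\Theta$ of $R^{W_S}_{m-m_S}$ by $-1$ is \emph{false}, already for simple $\alpha$. Your own hesitation is well founded: writing $\Theta$ as the image of $\Pi_{\mathrm{full}}/\Pi_S$ and ``tracking signs'' would require $s_\alpha(\Pi_S)=\Pi_S$, but $s_\alpha$ does not preserve $\Delta_S$ when $\alpha\notin\Delta_S$, and the discrepancy does not vanish in $R$. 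Concretely, in type $A_2$ with $S=\{s_1\}$ one has $\Theta=(x_1-x_3)(x_2-x_3)$, and in $R=\RR[x_1,x_2,x_3]/(e_1,e_2,e_3)$ a direct computation gives $s_2(\Theta)+\Theta=(x_2-x_3)^2=-3x_1^2\neq 0$, so $s_2$ does not act by $-1$ on $\Theta$. (It does not act by $+1$ either, so the theorem survives; the problem is only with your proposed proof of it.) In general $s_\alpha(\Theta)$ need not lie in $R^{W_S}_{m-m_S}$ at all, since $R^{W_S}$ is not $W$-stable.

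The paper sidesteps this by arguing one level removed. Rather than computing $s(\Theta)$, it notes that $\ell^{m-m_S}$ is invariant under the entire subgroup $W'=\langle W_S,s\rangle$, and then uses the Poincar\'e pairing on $R$ (whose top piece $R_m$ carries the sign representation) to conclude that a nonzero $W'$-invariant in degree $m-m_S$ forces a nonzero $W'$-\emph{anti}-invariant in degree $m_S$. Since $W'$ strictly contains $W_S$ it has more than $m_S$ reflections, and any $W'$-anti-invariant must be divisible by all the corresponding linear forms, hence have degree strictly greater than $m_S$ --- a contradiction. This duality-plus-degree argument replaces the explicit sign computation that your approach needs but cannot supply.
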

\begin{proof}
Let $\ell \in R^{W_S}_1$ be a strong Lefschetz element of $R^{W_S}$,
and $s \in W$ a reflection not contained in $W_S$.
Assume that $\ell$ is fixed by the action of $s$.
Since the multiplication map
$\times \ell^{m-m_S}: R^{W_S}_0 \to R^{W_S}_{m-m_S}$
is bijective,
$\ell^{m-m_S} \in R^{W_S}_{m-m_S}$ is nonzero.
It follows from the assumption that the nonzero element 
$\ell^{m-m_S} \in R^{W_S}_{m-m_S}$ is fixed by the
subgroup $W'$  of $W$ generated by $W_S$ and the reflection $s$.
Note that the number of reflections in $W'$ is greater than
that in $W_S$.

Fix a basis $X_{w_0}$ of the one-dimensional space $R_m$.
Define a symmetric bilinear form on $R$ by
$\langle f,g \rangle = a$, where $a$ is defined by
$f g = a X_{w_0} + (\text{lower degree terms})$.
In fact, this bilinear form gives a perfect pairing of 
$R_d$ with $R_{m-d}$.
Since $R_m$ consists of anti-invariants,
this bilinear form gives correspondence between
$W$-submodules $U$ of $R_d$ and $W$-submodules $U'$ of $R_{m-d}$,
where $U'$ is isomorphic to 
$(\text{sign representation}) \otimes_{\RR} (\text{dual of $U$})$.
Thanks to this correspondence,
there exists a nonzero anti-invariant 
with respect to the action of $W'$ in $R_{m_S}$.

In general, for a reflection group which has $m'$ reflections,
the degrees of anti-invariants are at least $m'$,
since anti-invariants should be divisible by any linear forms
which define reflecting hyperplanes.
As noted above,
the number of reflections in $W'$ is greater than $m_S$,
and this is a contradiction.
We thus have proved that
the strong Lefschetz element $\ell$ is not fixed by 
the reflection $s$.
\end{proof}

Next we prove that the condition in
Theorem~\ref{thm:necessary-condition-for-parabolic-invariant}
is also the sufficient condition, when $W$ is a Weyl group.
To prove that $\ell \in R^{W_S}_1$ is a strong Lefschetz element,
it is enough to show that
$w(\ell)$ is a strong Lefschetz element of $w(R^{W_S})$,
which is a graded subalgebra of $R$,
for an element $w \in W$. 
Since $w(R^{W_S}) = R^{w W_S w^{-1}}$,
it suffices to show that
\begin{enumerate}
  \item
  there exists a subset $S'$ of $S_0$
  such that $w W_S w^{-1} = W_{S'}$,
  \item
  $w(\ell) \in R^{W_{S'}}_1$
  is in the K\"ahler cone,
  that is,
  the inner product $(w(\ell), \alpha)$ is positive
  for any simple roots $\alpha$ corresponding to $S_0 \smallsetminus S'$,
\end{enumerate}
where $\Delta$ is realized in $R_1$,
and the $W$-invariant inner product $(,)$ is 
defined on $R_1$.
We prove the following theorem along this line.

\begin{theorem}
\label{thm:sufficient-condition-for-parabolic-invariant}
Let $R$ be the coinvariant ring of a Weyl group $W$.
Let $S$ be a subset of the set of the simple reflections of $W$.
Let $W_S$ be the subgroup of $W$ generated by $S$,
and $R^{W_S}$ the parabolic invariant.
If $\ell \in R^{W_S}_1$ is not fixed by any reflections in
$W \smallsetminus W_S$,
then $\ell$ is a strong Lefschetz element of $R^{W_S}$.
\end{theorem}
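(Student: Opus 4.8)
The plan is to reduce Theorem~\ref{thm:sufficient-condition-for-parabolic-invariant} to the already-available geometric input—the Hard Lefschetz theorem for the partial flag variety $G/P$ and Chevalley's/Kleiman's ampleness criterion—exactly as the two itemized reductions preceding the statement suggest. So the first step is to take $\ell \in R^{W_S}_1$ not fixed by any reflection in $W \smallsetminus W_S$, view it inside $R_1 \simeq \mathfrak{h}^*$, and produce an element $w \in W$ carrying $\ell$ into (the closure-complement of) the fundamental Weyl chamber in the appropriate sense. The key point is that since $\ell$ is $W_S$-invariant, $\ell$ lies in the subspace $\mathfrak{h}^*_P$; being unfixed by reflections outside $W_S$ means $(\ell,\alpha)\neq 0$ for every root $\alpha$ not in the parabolic sub-root-system spanned by $S$. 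The chamber structure we care about is that of the root subsystem $\Delta \smallsetminus \Delta_S$ (the roots cut out the walls relevant to $G/P$); I would invoke the transitivity of $W_S\backslash W$ acting on the cones $\{x \in \mathfrak{h}^*_P : (x,\alpha)\neq 0 \text{ for all } \alpha \in \Delta\smallsetminus\Delta_S\}$ to move $\ell$ into the cone where $(\ell,\alpha)>0$ for all $\alpha \in \Sigma \smallsetminus \Sigma_P$. Concretely: choose $w\in W$ of minimal length in its coset $W_Sw$ such that $w(\ell)$ lies in that cone; one must check that such a coset exists and that $w(\ell)$ is still unfixed by the relevant reflections (automatic, since $w$ permutes $\Delta$).

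The second step is item (1) of the itemized reduction: one needs $w W_S w^{-1} = W_{S'}$ for some $S' \subseteq S_0$, so that $w(R^{W_S}) = R^{W_{S'}}$ is again a \emph{parabolic} invariant realizing $H^*(G/P', \RR)$. This is where I expect the main obstacle to lie, and the natural fix is \emph{not} to conjugate by an arbitrary $w$ but to use the standard theory of double cosets $W_{S'}\backslash W / W_S$ and minimal-length representatives: there is a well-known characterization (Deodhar, or the Howlett result) of exactly when $wW_Sw^{-1}$ is again standard, and more usefully, for each coset in $W_S\backslash W$ one can select the representative so that $w(\ell)$ lands in a standard parabolic's Kähler cone. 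The cleanest route is probably to observe that $\ell$ lies on a face of the Tits cone whose stabilizer is a conjugate of $W_S$, and a parabolic subgroup that is the stabilizer of a point of the (closed) fundamental chamber is automatically standard; so I would choose $w$ to move $\ell$ into the closed fundamental chamber with $(\ell,\alpha)>0$ precisely for $\alpha \in \Sigma\smallsetminus\Sigma_P$ and $(\ell,\alpha)=0$ for $\alpha\in\Sigma_P$, which simultaneously achieves both (1) and (2).

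Once the reduction is in place, the third step is immediate: $w(\ell) \in R^{W_{S'}}_1 = H^2(G/P',\RR)$ with $(w(\ell),\alpha)>0$ for all $\alpha \in \Sigma\smallsetminus\Sigma_{P'}$ is, by Chevalley's ampleness criterion, the class of an ample line bundle, hence (by Kleiman) lies in the ample cone, hence is a Kähler class; the Hard Lefschetz theorem (Proposition~\ref{prop:hard-lefschetz-theorem}) then says $w(\ell)^{(m-m_S)-2i} : H^{2i} \to H^{2((m-m_S)-i)}$ is an isomorphism for all $i$, using that $\dim_\CC(G/P') = m - m_S$ by Proposition~\ref{prop:basic-structure-of-parabolic-invariant}. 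Translating back through the algebra isomorphism $R^{W_{S'}} \simeq H^*(G/P',\RR)$ shows $w(\ell)$ is a strong Lefschetz element of $w(R^{W_S})$, and since the $W$-action sends strong Lefschetz elements to strong Lefschetz elements, $\ell$ itself is a strong Lefschetz element of $R^{W_S}$. The only genuinely delicate point, as noted, is the conjugacy/standardness claim in the second step; everything else is formal once that is settled.
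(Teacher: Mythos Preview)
Your ``cleanest route'' is exactly the paper's argument: move $\ell$ into the closed fundamental Weyl chamber by some $w\in W$, observe that the stabilizer of $w(\ell)$ is then a \emph{standard} parabolic $W_{S'}$ (the paper phrases this as an explicit face/wall analysis showing the walls of the chamber through $\ell$ are defined by a base of $\Delta_S$, invoking parabolicity via $\langle\Delta_S\rangle\cap\Delta=\Delta_S$), so that $wW_Sw^{-1}=W_{S'}$ and $w(\ell)$ lands in the K\"ahler cone of $G/P'$. Your preliminary detours through $W_S\backslash W$-transitivity and double-coset theory are unnecessary, but once you settle on the face-stabilizer argument the proof coincides with the paper's.
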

\begin{proof}
First we make some preparations on root systems.
Let $\ell \in R_1$ be an element satisfying the following two
conditions:
\begin{enumerate}
\item[(i)] 
$\ell \in R^{W_S}_1$, that is,
$\ell$ is fixed by any reflections in $W_S$,
\item[(ii)]
$\ell$ is not fixed by any reflections in $W \smallsetminus W_S$.
\end{enumerate}
Let $\Delta_S$ be the root subsystem of $\Delta$
generated by the simple roots corresponding to $S$,
and $n$ and $k$ the ranks of $\Delta$ and $\Delta_S$, respectively.
Let $C$ be a closed Weyl chamber containing $\ell$,
and $F$ its face defined as the intersection 
with the orthogonal complement of $\langle \Delta_S \rangle$,
where $\langle \Delta_S \rangle$ denotes the real vector subspace
of $R_1$ spanned by $\Delta_S$.
Note that
$\codim_{\RR} F \ge \dim_{\RR} \langle \Delta_S \rangle = k$,
and that $\ell \in F$ by Condition (i).
The face $F$ is an intersection of 
walls (i.e. faces of codimension one)
of $C$ defined by roots in $\Delta_S$ by Condition (ii).
Since the number of such walls are at most $k$,
we have $\codim_{\RR} F = k$.
We also remark that Condition (ii) is rephrased as
that $\ell$ is in the interior of $F$.
Let $\{ \alpha_1, \alpha_2, \ldots, \alpha_k \} \subset \Delta_S$
be the roots defining the $k$ walls containing $F$,
where $\alpha_i$ is chosen as $(\alpha_i, C) \ge 0$ for every $i$.
Then $\{ \alpha_1, \alpha_2, \ldots, \alpha_k \}$ is a basis of
the real vector space $\langle \Delta_S \rangle$,
and the subsystem $\langle \Delta_S \rangle \cap \Delta$
of $\Delta$ is generated by this basis,
since this basis becomes a subset of the simple system
when we regard $C$ as the fundamental Weyl chamber.
Note that $\langle \Delta_S \rangle \cap \Delta = \Delta_S$,
since $\Delta_S$ is a parabolic root subsystem.
Thus $\{ \alpha_1, \alpha_2, \ldots, \alpha_k \}$ is a base
(i.e. can be a simple system) of $\Delta_S$.

Based on the above preparations,
the theorem is proved along the line indicated before.
There exists $w \in W$ such that $w(C) = C_0$,
where $C_0$ is the closed fundamental Weyl chamber.
Thus the root subsystem $w(\Delta_S)$ of $\Delta$ has a base
$S' :=
\{ w(\alpha_1), w(\alpha_2), \ldots, w(\alpha_k) \} \subset \Delta$.
This base is a subset of the simple system of $\Delta$
corresponding to $S_0$,
since each $w(\alpha_i)$ defines a wall of $C_0$,
and $(w(\alpha_i), C_0) \ge 0$.
Therefore $S'$ satisfies $w(\Delta_S) = \Delta_{S'}$,
and hence $w W_S w^{-1} = W_{S'}$.

The image $w(\ell)$ is in the interior of the face $w(F)$ 
of the fundamental Weyl chamber $C_0$.
Therefore $w(l)$ is fixed by the reflections corresponding to
the roots in $\Delta_{S'}$,
and is not fixed by those corresponding 
to $\Delta \smallsetminus \Delta_{S'}$.
This means that $w(\ell) \in w(R^{W_S}) = R^{W_{S'}}$
is in the K\"ahler cone of the corresponding homogeneous space $G/P$.
Hence we have proved the theorem.
\end{proof}

Finally Theorems~\ref{thm:necessary-condition-for-parabolic-invariant}
and \ref{thm:sufficient-condition-for-parabolic-invariant}
prove our second main theorem
(Theorem~\ref{thm:condition-for-partial-flag}).

\bibliographystyle{amsalpha}
\bibliography{math}

\end{document}